\def\EMAIL#1{\href{mailto:#1}{#1}}
\newcommand{\pp}{\mathbf{P}}
\newcommand{\rr}{\mathbf{R}}
\newcommand{\bm}{\mathbf{m}}
\newcommand{\mm}{\bm}
\newcommand{\calS}{\mathcal{S}}
\newcommand{\bY}{\mathbf{Y}}
\newcommand{\by}{\mathbf{y}}
\newcommand{\byone}{\mathbf{y}_{\cdot,1}}
\newcommand{\byopt}{\{\mathbf{y}^*(t)\}_{0 \le t \le T-1}}
\newcommand{\byoptt}{\mathbf{y}^{t*}}
\newcommand{\bmoptt}{\mathbf{m}^{t*}}
\newcommand{\yopt}{y^*}
\newcommand{\mopt}{m^*}
\newcommand{\yoptt}{y^{t*}}
\newcommand{\bv}{\mathbf{v}}
\providecommand{\norm}[1]{\| #1\|}
\newcommand{\R}{\mathbb{R}}
\newcommand\ball{\mathcal{B}}
\newcommand{\opt}{V^{(N)}_{\mathrm{opt}} (\bm(0),T)}
\newcommand{\rel}[1][N]{V^{}_{\mathrm{rel}}(\bm(0),T)}
\newcommand{\wa}{\emph{water}}
\newcommand\bS{\mathbf{S}}
\newcommand\bA{\mathbf{A}}
\newcommand\ba{\mathbf{a}}
\newcommand\bs{\mathbf{s}}
\newcommand\bP{\mathbf{P}}
\newcommand\bM{\mathbf{M}}
\newcommand\beps{\boldsymbol{\varepsilon}}
\newcommand\bzero{\mathbf{0}}
\newcommand\bMN{\mathbf{M}^{(N)}}
\newcommand\bYN{\mathbf{Y}^{(N)}}
\newcommand\bEN{\mathbf{E}^{(N)}}
\newcommand\MN{M^{(N)}}
\newcommand\bMNpi{\mathbf{M}^{\pi,(N)}}
\newcommand\YN{Y^{(N)}}
\newcommand\bYNpi{\mathbf{Y}^{\pi,(N)}}
\newcommand\YNpi{Y^{\pi,(N)}}
\newcommand\N{\mathbb{N}}
\newcommand\bR{\mathbf{R}}
\newcommand\calO{\mathcal{O}}
\newcommand\calE{\mathcal{E}}
\newcommand\calON{\mathcal{O}(1/\sqrt{N})}
\newcommand\calOe{e^{-\mathcal{O}(N)}}
\newcommand{\expect}[1]{\mathbb{E}\left[#1\right]}
\newcommand{\var}[1]{\mathrm{var}\left[#1\right]}
\newcommand{\proba}[1]{\mathbb{P}\left(#1\right)}
\newcommand{\expectpi}[1]{\mathbb{E}_{\pi}\left[#1\right]}
\newcommand{\Vpi}{V^{(N)}_{\mathrm{\pi}} (\bm(0),T)}
\newcommand\piprio{\pi^{\mathrm{priority}}}
\newcommand{\abs}[1]{\left|#1\right|}
\newcommand{\floor}[1]{\left\lfloor#1\right\rfloor}
\newcommand{\norme}[1]{\left\| #1 \right\|_1}
\newcommand\splus{\mathcal{S}^{+}}
\newcommand\szero{\mathcal{S}^{0}}
\newcommand\sminus{\mathcal{S}^{-}}
\newcommand\sempty{\mathcal{S}^{\emptyset}}
\newcommand\shat{\widehat{\mathcal{S}}}
\newcommand\MNshat{\widehat{\MN_s}}
\newcommand{\Update}{V^{(N)}_{\mathrm{LP-update}}(\bm(0),T)}
\newcommand{\updates}[2][T-t]{V^{(N)}_{\mathrm{LP-update}}(\bMN(#2),#1)}
\newcommand{\relm}[1]{V_{\mathrm{rel}}(#1)}
\theoremstyle{theorem} 			
\newtheorem{prop}[theorem]{Proposition}
\newtheorem{lem}[theorem]{Lemma}
\newtheorem{thm}[theorem]{Theorem}
\newtheorem{rem}[theorem]{Remark}
\newcommand{\bigzero}{\mbox{\normalfont\Large\bfseries (0)}}
\newcommand{\bigid}{\mbox{\normalfont\Large\bfseries Id}}
\newcommand{\bigz}{\mbox{\normalfont\Large\bfseries Z}}
\begin{document}

\RUNAUTHOR{N. GAST, B. GAUJAL, and C. YAN}

\RUNTITLE{LP-based policies for restless bandits}

\TITLE{LP-based policies for restless bandits: necessary and sufficient conditions for (exponentially fast) asymptotic optimality}

\ARTICLEAUTHORS{
\AUTHOR{Nicolas GAST}
\AFF{Univ. Grenoble Alpes, Inria, CNRS, Grenoble INP, LIG, 38000
Grenoble, France \EMAIL{nicolas.gast@inria.fr}}
\AUTHOR{Bruno GAUJAL}
\AFF{Univ. Grenoble Alpes, Inria, CNRS, Grenoble INP, LIG, 38000
Grenoble, France \EMAIL{bruno.gaujal@inria.fr}}
\AUTHOR{Chen YAN}
\AFF{STATIFY, Inria, 38334 Saint Ismier, France; Biostatistics and Spatial Processes, INRAE, 84914 Avignon, France \EMAIL{chen.yan@inria.fr}}
}

\ABSTRACT{
We provide a framework to analyse control policies for the restless Markovian bandit model, under both finite and infinite time horizon. We show that when the population of arms goes to infinity, the value of the optimal control policy converges to the solution of a linear program (LP). We provide necessary and sufficient conditions for a generic control policy to be: i) asymptotically optimal; ii) asymptotically optimal with square root convergence rate; iii) asymptotically optimal with exponential rate. We then construct the LP-index policy that is asymptotically optimal with square root convergence rate on all models, and with exponential rate if the model is non-degenerate in finite horizon, and satisfies a uniform global attractor property in infinite horizon. We next define the LP-update policy, which is essentially a repeated LP-index policy that solves a new linear program at each decision epoch. We conclude by providing numerical experiments to compare the efficiency of different LP-based policies.
}



\KEYWORDS{restless bandits, linear programming, Markov decision processes}
\MSCCLASS{Primary: 90C40; secondary: 90C05, 90B99}
\ORMSCLASS{Primary: dynamic programming/optimal control: Markov: finite state;
  secondary: programming: linear; Probability: Markov processes}

\HISTORY{Received April 12, 2022; revised February 17, 2023, and August 30, 2023; accepted 30 October 2023}


\maketitle

\section{\bfseries\scshape{Introduction}} \label{sec:introduction-and-related-works}

In this paper we investigate the famous  Markovian restless bandit problem (termed RB for short) over a finite and an infinite horizon. In this problem, a decision maker faces a bandit with $N$ arms, where each arm can be seen as a Markov decision process with two actions: active and passive. At each decision epoch, the decision maker chooses which $\alpha N$ of these $N$ arms to activate, with the goal of maximizing the expected total reward over a finite (or infinite) time-horizon. All transition kernels and state-dependent rewards are assumed to be known. The arms produce rewards and evolve independently, but are coupled through the budget constraint on the number of arms that can be activated at each decision epoch. The word "restless" refers to the transition kernel under the passive action being not necessarily the identity matrix, hence generalizes the classical rested bandit model in \citet{Gittins79banditprocesses}.

This problem arises in various domains and has numerous applications (see \citet{zhang2021restless} and the references therein for examples). Solving the problem exactly has been shown to be PSPACE-hard in \citet{Papadimitriou99thecomplexity}. Consequently, there has been substantial interest in developing approximate solutions whose performance are provably close to optimal, and at the same time require computations that do not grow exponentially with the number of arms $N$. We shall focus on the asymptotic regime where the arm population $N$ grows and the activation budget at each epoch, $\alpha N$, is proportional to $N$. This regime was first studied in \citet{10.2307/2984953} and has been of longstanding theoretical and practical interest.

\subsection*{Literature review} \label{subsec:related-works}
The pioneering work on this problem appears in \citet{whittle-restless}, who proposed the famous Whittle's Index Policy (WIP) on infinite horizon problems, and conjectured that the policy is asymptotically optimal, meaning that the optimality gap (the difference between the performance of the optimal policy and of WIP) converges to zero when $N$ goes to infinity. This conjecture has been proven to be true in \citet{WeberWeiss1990}, under an additional uniform global attractor property (termed UGAP for short); but is false in general, as shown by the four states counter-example provided in the same paper \citet{WeberWeiss1990}. A later work in \citet{gast2023exponential} actually shows that the optimality gap converges to zero exponentially fast with $N$ in almost all cases, which provides a theoretical explanation to the empirical good performance of WIP. 

One potential drawback of WIP is that it requires the technical condition of indexability on the RB. Many works have been devoted to computing the indices or testing indexability, e.g. \citet{article,math8122226,gast2023testing}, which makes WIP easily computable for indexable problems. Yet, we can not apply this policy if the RB is non-indexable. To circumvent this weakness, another approach, based on solving linear programs, is proposed in \citet{Ve2016.6}, where a set of LP-priority policies is defined from the solution of a linear program, and is shown to be all asymptotically optimal (assuming again the UGAP), regardless of indexability. WIP is inside this set of LP-priority policies, if the RB is indexable. We show in this paper that the asymptotic optimality proven in \citet{Ve2016.6} occurs at exponential rate as well under mild additional assumptions.  

Studying the problem under infinite horizon is theoretically interesting, but all these asymptotic optimality results mentioned previously rely on the UGAP, which in most cases can only be verified numerically, and may very well not be satisfied on certain problems (\citet{gast2023exponential}). This motivates another research direction that considers the corresponding finite horizon model using the linear program approach.

To the best of our knowledge, this idea first appears in \citet{hu2017asymptotically}, that applies time-dependent Lagrange multipliers to define a LP-based index policy, and shows subsequently that it is asymptotically optimal (i.e. achieving an $o(1)$ optimality gap). Note that for finite-horizon problem, the asymptotically optimal policies are not necessarily priority policies. This problem is generalized to multiple actions (instead of the two actions active and passive) in \citet{ZayasCabn2017AnAO}, where the authors propose a policy that achieves an $\mathcal{O}(\log N /\sqrt{N})$ optimality gap, and the very recent work \citet{zhang2022near} show that an $\calON$ convergence rate can be obtained under the discounted criterion. The problem has also been studied under the case of non-statistically identical arms in \citet{Brown2020IndexPA}. In this work, which seems largely independent of the previously cited papers and use radical different proof methodology, the authors propose a policy that achieves an $\calON$ optimality gap under this more general setting. As suggested by \citet{Brown2020IndexPA}, the convergence appears to be faster than $\calON$ on certain problems. In \citet{zhang2021restless}, the authors show that this is indeed the case and prove that one can obtain $\mathcal{O}(1/N)$ rate for problems that are \emph{non-degenerate}. Actually, by refining their policy, we later show in this paper that this $\mathcal{O}(1/N)$ rate can be further improved to be $\calOe$.

In this work we propose an alternative policy, called  LP-update, that solves a LP at each time step  leveraging the knowledge of current state. This policy has also been considered in a more general case with  multiple actions as well as multiple budget constraints in a followup paper \citet{gast2022lp}. A similar idea motivated by the certainty equivalent control from dynamic programming is also used  in \citet{gallego1997multiproduct,jasin2012re,wu2015algorithms}. In \citet{jasin2012re} and \citet{wu2015algorithms} the authors propose policies that achieve $\calO(1/N)$ convergence rate, provided that their LP is non-degenerate. Note that the meaning of "degeneracy" as considered in \citet{jasin2012re} and \citet{wu2015algorithms} is the classical notion of degeneracy in standard LP theory on the simplex method (see e.g. Section 5.3 of \citet{matousek2006understanding}), and should not be confused with the "degeneracy" considered in the current paper. There is nevertheless a link between these two notions, discussed further in \citet{gast2022lp}.


\subsection*{Summary of contributions} \label{subsec:summary-of-contribution}

In this paper, we provide a generic framework to study the relationship between restless bandit problem and the LP relaxations introduced in \citet{hu2017asymptotically} for the finite horizon and in \citet{Ve2016.6} for the infinite horizon.  In the aforementioned papers, it is shown that the value of the stochastic control problem with $N$ arms converges to the solution of this LP as $N$ goes to infinity. We go further and make the following contributions:
\begin{enumerate} [label=\roman*)]
  \item The first contribution is to provide a new general framework to study the asymptotic performance of control policies for finite horizon RB. In this framework, any admissible policy is a \emph{deterministic} map from arm distribution vectors to decision vectors, which is independent to the arm population $N$. This dependence is only restored later by applying a randomized rounding technique, discussed in Section \ref{subsec:admissible-policy-and-randomized-rounding}.  The advantage of this approach is that it allows us to analyse the asymptotic optimality together with the convergence rate of any policy, by simply investigating properties of these deterministic maps. More precisely, we show that
       \begin{enumerate} [label=\alph*)]
         \item A {\it continuous} policy is asymptotically optimal if and only if it is LP-{\it compatible} (defined in Section \ref{subsec:LP-compatible}).
         \item If in addition the policy is {\it Lipschitz continuous}, then the asymptotic optimality occurs at rate $\calON$.
         \item If in addition the policy is {\it locally linear} around the  LP solution, then the asymptotic optimality occurs at rate $\calOe$.
       \end{enumerate}
       These properties show that the asymptotic performance of a control policy is intimately linked with the LP relaxation.
  \item We use the above characterization to provide sufficient conditions for the existence of LP-compatible policies, and to provide an effective construction of such policies. In particular:
  \begin{enumerate}[label=\alph*)]
    \item For any finite horizon RB, there always exists a LP-compatible Lipschitz continuous policy.
    \item We show that to ensure the local linearity around the optimal LP solution as in (i) (c), it is necessary and sufficient for the RB to be \emph{non-degenerate}, a condition already introduced in \citet{zhang2021restless} and defined in Section \ref{subsec:def-nondegenerate}. Moreover, we exhibit a degenerate example in Section \ref{subsec:necessary-condition-for-exponential} for which no policy converges to the LP relaxation bound faster than the classical $\calON$ rate.
  \end{enumerate}
  We also show that the non-degeneracy property is almost equivalent to a property that we call \emph{rankability}, and that implies the existence of an asymptotically optimal priority policy.
  \item The above results show that there exist many policies that are asymptotically optimal. Yet, for a finite number of arms $N$, not all will perform equality good. To select the best policy for small $N$, we consider two possibilities of improvement: (1) the LP-index policy already introduced in \cite{Brown2020IndexPA}, and (2) the LP-update policy inspired from the \emph{certainty equivalent control} in dynamic programming.
  \item Our last contribution is to analyse the convergence rate of LP-based policies for RB in the {\it infinite horizon} case. Under the UGAP, we prove that the policy introduced in \citet{Ve2016.6} has an exponential convergence rate if the RB is non-degenerate. Our proof uses similar techniques as in \citet{gast2023exponential}.
\end{enumerate}

Note that the new approach we propose in the current paper first defines a deterministic map using the LP solution, and the policy is constructed  from this deterministic map. The asymptotic optimality, as well as the convergence rate are then transformed into studying properties of this map. This point of view distinguishes our work from the existing papers on asymptotic heuristics of the RB model, as in \cite{hu2017asymptotically,ZayasCabn2017AnAO,Brown2020IndexPA,Ve2016.6,gast2023exponential,zhang2021restless,zhang2022near}. In these works, the authors start by constructing some specific policy, mostly from the LP relaxation, and prove the corresponding convergence rate afterwards. We provide a new look on this problem, by first capturing essential properties of the model, so that \emph{any} admissible policy satisfying such properties can achieve the desired convergence rate. The convergence results proven previously, e.g. the Lagrange index policy in \citet{Brown2020IndexPA}, the fluid-priority policies in \citet{zhang2021restless} can be seen as consequences of the more general results from our paper. By investigating the necessary and sufficient properties for guaranteeing a certain convergence rate, we aim at a better understanding of the fundamental complexity of the problem.

\subsection*{Outline}
The rest of the paper is organized as follows: Section \ref{sec:model-and-notations} defines the finite horizon RB model as well as the admissible policy. Section \ref{sec:a-hierarchy-of-policies} introduces a hierarchy of admissible policies, and prove asymptotic optimality (with convergence rate if possible) inside each of the hierarchy. Section \ref{sec:existenc-and-construction-of-policies} provides concrete constructions for the polices discussed in Section \ref{sec:a-hierarchy-of-policies}, and gives necessary and sufficient conditions for exponential convergence rate. Section \ref{sec:LP-update-policy} describes the LP indices and the LP-update policy. Section \ref{sec:infinite-horizon-case} deals with the infinite horizon case. Section \ref{sec:numerical-experiment} provides numerical studies and finally Section \ref{sec:conclusion-and-future-researches} concludes our work.

\section{\bfseries\scshape{Model description}} \label{sec:model-and-notations}

This paper is mainly focused on discrete time \emph{finite horizon restless bandit} (RB) models. The infinite horizon RB models will be considered in Section~\ref{sec:infinite-horizon-case}. We first describe the model in Section \ref{subsec:model-description}. We introduce the LP relaxation in Section \ref{subsec:LP-relaxation}. We define the admissible policy and the randomized rounding procedure in Section \ref{subsec:admissible-policy-and-randomized-rounding}, and we list our notational convention  in Section \ref{subsec:notaional-convention}.

\subsection{Finite horizon RB} \label{subsec:model-description}

A  finite horizon RB model is composed of $N$ statistically identical arms. Each arm can be considered as a Markov decision process (MDP) with a  finite state space $\mathcal{S} = \{1\dots d\}$. The state of the $n$th arm at the \emph{discrete} time $t \ge 0$ is denoted by $S_n(t)\in\{1\dots d\}$. The state of all the arms at time $t$ is denoted by $\bS(t) = \big( S_1(t), \dots, S_N(t) \big)$. At each time $t$, a decision maker observes $\bS(t)$ and chooses a fraction $0 < \alpha < 1$ of the $N$ arms to be activated. In the literature, some researchers study the problem under the non-binding constraint that \emph{at most} a fraction $\alpha$ of arms can be activated at each time (e.g. \citet{Brown2020IndexPA}, \citet{Ve2016.6}). By adding $\alpha N$ dummy arms that never change states and give zero rewards, we transform the non-binding setting into the binding setting since, for a given set of active arms, activating additional dummy arms does not modify the behavior of the system. Conversely, if we replace the active rewards $R^1_s$ by $R^1_s + R'$ with a large enough overall positive constant $R'$, we retrieve the non-binding setting from the binding one.

Note that in our model we do not need to assume $\alpha N$ to be an integer. If it is not, then a coin is tossed at the beginning of each decision epoch and the decision maker has to activate $\floor{\alpha N}+1$ arms with probability $\{\alpha N\} = \alpha N - \floor{\alpha N}$, and $\floor{\alpha N}$ arms with probability $1-\{\alpha N\}$, so that in expectation we are activating $\alpha N$ arms. This procedure will be justified later in Remark \ref{rem:comparison-with-other-paper}. We denote the action vector at time $t$ by $\bA(t) = \big( A_1(t),\dots, A_N(t) \big)$. For each arm that is in state $s$ and whose action is $a$, the decision maker earns an immediate reward $R^a_s\in\R$.

Given $S_n(t)=s$ and $A_n(t)=a$, the arm $n$ makes a Markovian transition to a state $s'$ with probability $P^a_{s,s'}$. Those transitions are independent among all arms: for given states $\bs,\bs'$ and activation vector $\ba$, one has:
\begin{align}
  \label{eq:markovian_evolution}
  \proba{\bS(t+1)=\bs' \mid \bS(t),\bA(t), \dots, \bS(0),\bA(0)} = \proba{\bS(t+1)=\bs' \mid \bS(t)=\bs, \bA(t)=\ba} = \prod_{n=1}^N P^{a_n}_{s_n, s'_{n}}.
\end{align}



By construction, the arms are exchangeable: two arms in the same state and for which the same action is chosen provide the same reward and have the same transition probabilities. This implies that the problem can be expressed by counting the number of arms in each state and the number of arms activated in each state. For a given state $s$, we denote by $\MN_s(t)$ the \emph{fraction} of arms in state $s$ at time $t$, and by $\YN_{s,a}(t)$ the \emph{fraction} of arms in state $s$ at time $t$ for which decision $a\in\{0, 1\}$ is taken.  We denote the corresponding vectors as $\bMN(t)\in\Delta^d$ and $\bYN(t) := \big(\YN_{s,1}(t),\YN_{s,0}(t)\big)_{s\in \{1\ldots d\}}\in\Delta^{2d}$, where $\Delta^d$ (and $\Delta^{2d}$) are the $d$-dimensional (and $2d$-dimensional) simplex of probability vectors.

We denote by $V^{(N)}_{\mathrm{opt}} (\bm(0),T)$ the maximal expected gain (per arm) that can be obtained by the decision maker:
\begin{maxi!}|s|{\bY \ge \mathbf{0}}{\mathbb{E} \Big[ \sum_{t=0}^{T-1} \sum_{s,a} R^{a}_{s} \YN_{s,a}(t) \Big]\label{eq1-1}}{\label{eq:prob1-reformulated}}{V^{(N)}_{\mathrm{opt}} (\bm(0),T) =}
  \addConstraint{\text{Arms follow the Markovian evolution \eqref{eq:markovian_evolution}}\label{eq2-trans}}{}
  \addConstraint{\YN_{s,0}(t)+\YN_{s,1}(t) = \MN_s(t)\ \ \forall t,s \label{eq2-2}}{}
  \addConstraint{\sum_{s} \YN_{s,1}(t) =
  \begin{cases}
  (\floor{\alpha N}+1)/N, & \mbox{with probability } \{\alpha N\} \\
  \floor{\alpha N}/N, & \mbox{otherwise}.
  \end{cases} \ \ \forall t \label{eq2-1}}{}
  \addConstraint{\MN_s(0) = m_s(0) \ \ \forall s, \label{eq:init1-1}}{}
\end{maxi!}
where $\bm(0) \in \Delta^d$ is the empirical measure of initial state vector: $m_s(0) = \frac1N\sum_{n=1}^N \mathbf{1}_{\{s_n(0)=s\}}$ for all $s\in\{1\dots d\}$. Note that \eqref{eq2-1} represent the constraints that $\alpha N$ of the $N$ arms must be activated at each time, and \eqref{eq:init1-1} correspond to the initial condition.


\subsection{LP relaxation} \label{subsec:LP-relaxation}
The key difficulty in the above optimization problem \eqref{eq:prob1-reformulated} is the constraint \eqref{eq2-1} that couples the evolution of all arms. The idea is to replace it by the relaxed constraint requiring that the \emph{expected} proportion of activated arms is $\alpha$ for all time steps $t$:
\begin{equation}\label{eq:relax-constraint}
  \sum_s \mathbb{E}_{\pi} \big[ \YN_{s,1}(t) \big] = \alpha, \ \ \forall t.
\end{equation}
The key property that makes this relaxed problem simpler is that it can then be rewritten entirely by using only the variables $y_{s,a} (t) := \expect{\YN_{s,a}(t)}$. To see that, we will show later in Lemma~\ref{lem:Markovian-transition-analysis} that the Markovian evolution \eqref{eq:Markovian-transition} implies that
\begin{align*}
  \expect{  \MN_s(t+1) \mid \bYN(t) = \by} = \sum_{s', a} y_{s',a}P^a_{s',s}.
\end{align*}
This implies that \eqref{eq2-trans} and \eqref{eq2-2} can be replaced by \eqref{eq:markov_relaxed} in the optimization problem below.  The rest of the costs and constraints then depend only on the expected number of arms in each state. We can therefore write the relaxed optimization problem as a linear problem with value $V_{\mathrm{rel}}(\bm(0),T)$:
\begin{maxi!}|s|{\by \ge \mathbf{0}}{\sum_{t=0}^{T-1} \sum_{s,a} R_s^a y_{s,a} (t)}{\label{eq:relaxed_problem}}{V_{\mathrm{rel}}(\bm(0),T)=}
  \addConstraint{y_{s,0}(t+1) + y_{s,1}(t+1)= \sum_{s',a} y_{s',a} (t)P^{a}_{s's}\label{eq:markov_relaxed}\qquad}{}{\forall s,t}
  \addConstraint{\sum_{s} y_{s,1} (t) = \alpha \label{eq:active}}{}{\forall t}
  \addConstraint{y_{s,0}(0) + y_{s,1}(0)=m_s(0)}{}{\forall s. \label{eq:init2}}
\end{maxi!}

In the above optimization problem, the constraints \eqref{eq:active} are the relaxation of the constraints \eqref{eq2-1}. They impose that the expected fraction of activated arms is $\alpha$ at all time. The constraints \eqref{eq:markov_relaxed} correspond to the expected behavior of the Markovian evolution of the system. Similarly, \eqref{eq:init2} correspond to the initial condition \eqref{eq:init1-1}.

Note that the optimization problem~\eqref{eq:relaxed_problem} does not depend on the arm population $N$. Moreover, as it is a relaxation of \eqref{eq:prob1-reformulated}, it should be clear that $\opt\le  \rel$. Since finding an optimal policy for $\opt$ is impractical, our strategy is to obtain information from optimal solutions to the linear program \eqref{eq:relaxed_problem} to construct policies whose values converge fast to $\rel$ as $N$ goes to infinity. As $\opt\le  \rel$, this will imply that they become asymptotically optimal as $N$ goes to infinity.

\subsection{Admissible policies and randomized rounding} \label{subsec:admissible-policy-and-randomized-rounding}

A policy determines which arms are made active at each decision epoch. In what follows, we focus on Markovian policies: such a policy is a sequence of decision rules $\pi=(\pi_0\dots \pi_{T-1})$ such that the decision rule $\pi_t:\Delta^d\to\Delta^{2d}$ specifies the fraction of arms for each action: if $\by=\pi_t(\bm)$, then when the empirical state vector at time $t$ is $\bm$, a fraction $y_{s,a}$ among the $m_s$ arms in state $s$ take action $a$. We say that a policy is \emph{admissible} if for all times $t$, all states $\bm\in\Delta^d$ and $\by=\pi_t(\bm)$, we have
\begin{align}
  \label{eq:admissible}
  y_{s,a} \geq 0, \quad \sum_{s} y_{s,1} = \alpha, \quad \text{ and } \sum_{a} y_{s,a} = m_s \quad \forall s,a.
\end{align}
We also say that a policy is continuous (respectively Lipschitz continuous) if for all $t$, $\pi_t$ is continuous (respectively Lipschitz continuous).


Note that our definition of admissible policies is independent of the arm population $N$. Moreover, an admissible policy does not assume that if $\by=\pi_t(\bm)$, then $Ny_{s,a}$ should be an integer. Hence, to make a policy applicable to the original problem with $N$ arms, we use a procedure that we call \emph{randomized rounding} that activates $N y_{s,1}$ arms in state $s$ \emph{in expectation} and that works as follows:
\begin{itemize}
  \item In a first pass, one activates $\floor{N y_{s,1}}$ arms in state $s$, and we let $z_s := N y_{s,1}-\floor{N y_{s,1}}$;
  \item In a second pass, one activates an extra $Z_s\in\{0,1\}$ arm in state $s$, such that for all $s$, $Z_s$ are random variables that satisfy $\expect{Z_s}=z_s\in[0,1)$, and $\sum_s Z_s=\sum_s z_s := h$ (almost surely).
\end{itemize}
Note that by definition, $h = \lfloor \alpha N \rfloor - \sum_s\floor{N y_{s,1}}$  or $h =  \lfloor \alpha N \rfloor +1 - \sum_s\floor{N y_{s,1}}$ and is therefore an integer. To do the second pass, one cannot simply generate the random variables $Z_s$ independently, because such  variables $Z_s$ may not  sum to exactly $h$.
An efficient algorithm to solve the above problem can be found in Section 5.2.3 of \citet{DBLP:journals/corr/IoannidisY16}. It has time complexity $\mathcal{O}(h d \cdot \log d)$.

\subsection{Notation and terminology convention} \label{subsec:notaional-convention}
Throughout our presentation, a bold letter (e.g. $\by$, $\bm$) denotes a vector whereas a normal letter (e.g. $y_{s,a}(t)$, $m_s(t)$) denotes a scalar. The bold letter $\bm$ always denotes a  state vector (that lives in $\Delta^d\subset\R^d$) whereas $\by = (\by_{.,1},\by_{.,0})$ denotes a state-action pair vector (that lives in $\Delta^{2d}\subset\R^{2d}$). For a vector $\bm\in\R^d$, we denote by $\norme{\bm}=\sum_{s} \abs{m_s}$ the $L_1$ norm of $\bm$, and $\ball(\bm^*,\varepsilon):=\{\bm \mid \norme{\bm-\bm^*}\le\varepsilon\}$ is the ball centered at $\bm^*$ of radius $\varepsilon$. Apart from rare cases, capital letters (e.g. $\bY$, $\bM$) denotes random variables whereas small letters denote deterministic values (e.g. $\by$, $\bm$). We write $\bYN$, $\bMN$ to emphasize the dependence on arm population $N$ so that each of its coordinate is of the form $k/N$ with $k \in \N$. The function $\mathbf{1}_{E}$ is a random variable that equals $1$ if the event $E$ occurs and $0$ otherwise. For a set $\mathcal{S}$, we use $\abs{\mathcal{S}}$ to denote its cardinal. By "asymptotic optimality", implicitly we are always comparing with the LP relaxation bound, which is an upper bound on the value of the optimal policy. We do so since the later is in general not easily computable.

\section{\bfseries\scshape{A hierarchy of policies}} \label{sec:a-hierarchy-of-policies}

In this section we introduce a hierarchy of admissible policies having increasingly desirable properties. We first give some preliminary results in Section~\ref{subsec:preliminary-result}. In Section \ref{subsec:LP-compatible}, we define the notion of LP-compatible policy and show that a continuous admissible policy is asymptotically optimal if and only if it is LP-compatible. If furthermore the policy is Lipschitz continuous, then we obtain a square root convergence rate. In Section~\ref{subsec:locally-linear-policy-and-exponential}, we show that if the policy is locally linear around one optimal LP solution, then the convergence rate can be improved to be exponential. Proofs of Lemma \ref{lem:Markovian-transition-analysis}, Theorem \ref{thm:CLT-rate} and Theorem \ref{thm:exponential-rate} are given respectively in Section \ref{subsubsec:proof1}, \ref{subsubsec:proof2} and \ref{subsubsec:proof3}.

\subsection{Evolution of $\MN(\cdot)$ for a given policy} \label{subsec:preliminary-result}

Assume that an admissible policy $\pi$ is given. To analyse the performance of such a policy, we will analyse how this policy makes the state evolve from $\MN(t)$ to $\MN(t+1)$. This evolution is decomposed in three steps: first the policy specifies $\bY(t)= \pi_t(\MN(t))$, which indicates the proportion of arms that should be activated \emph{on average}, then the randomized rounding procedure produces $\YN(t)$, which indicates how many arms should be activated. Lastly, a new state $\MN(t+1)$ is generated from $\YN(t)$.
This is summarized in the following diagram:
\begin{equation} \label{eq:map-chain}
  \bMN(t) \xrightarrow[\text{policy} \ \pi_t(\cdot)]{\text{admissible}} \bY(t) \xrightarrow[\text{rounding}] {\text{randomized}} \bYN(t) \xrightarrow[\text{Markovian transition~\eqref{eq:markovian_evolution}}]{\text{each arm follows the}} \bMN(t+1).
\end{equation}.
In this section, we analyse the Markovian transition that generates $\bMN(t+1)$ from $\bYN(t)$. To do so, we define the function $\phi:\Delta^{2d}\to\Delta^d$ that maps a vector $\by\in\Delta^{2d}$ to a vector $\phi(\by)=\big((\phi(\by))_1, \ \dots, \ (\phi(\by))_d \big)\in\Delta^d$ whose $s$th component is
\begin{equation} \label{eq:Markovian-transition}
  (\phi(\by))_s = \sum_{s', a} y_{s,a}P^a_{s',s}.
\end{equation}

The following lemma shows that $\bMN(t+1)$ is approximately equal to $\phi(\bYN(t))$ when $N$ is large (this is implied by \eqref{eq:CLT}), with an error that decreases as $\calON$. This observation will be used to show that a continuous admissible policy is optimal if and only if it is LP-compatible.  Equation~\eqref{eq:unbiased} shows that given $\bYN(t)$, $\bMN(t+1)$ is equal to $\phi(\bYN(t))$ on average. This fact, combined with the Hoeffding-type inequality \eqref{eq:hoeffding} and the fact that $\phi$ is linear, will be critically used in the proof of the exponential rate.
\begin{lem}  \label{lem:Markovian-transition-analysis} \
Let $\bEN(t) = \bMN(t+1) - \phi(\bYN(t))$, where $\phi(\cdot)$ is given in \eqref{eq:Markovian-transition}. We have:
  \begin{align}
    \expect{\bEN(t) \mid \bYN(t)} &= \mathbf{0},\label{eq:unbiased}\\
    \expect{\norme{\bEN(t)} \mid \bYN(t)} &\le \frac{\sqrt{d}}{\sqrt{N}},\label{eq:CLT}\\
    \proba{\norme{\bEN(t)} \ge \epsilon \mid \bYN(t)} &\le 2d e^{-2N \epsilon^2/d^2}.\label{eq:hoeffding}
  \end{align}
\end{lem}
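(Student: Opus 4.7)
The starting point is to condition on $\bYN(t)$: by the Markovian evolution~\eqref{eq:markovian_evolution}, the transitions of the $N$ arms are then mutually independent, each arm $n$ in state $s_n$ with action $a_n$ moving to state $s'$ with probability $P^{a_n}_{s_n,s'}$. Writing $X_{n,s'}=\mathbf{1}_{\{S_n(t+1)=s'\}}$, we have $N\MN_{s'}(t+1)=\sum_{n=1}^N X_{n,s'}$, which is a sum of $N$ independent $\{0,1\}$-valued random variables. Grouping arms by state-action pair $(s,a)$ gives exactly $N\YN_{s,a}(t)$ arms with success probability $P^a_{s,s'}$ for destination $s'$.

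For \eqref{eq:unbiased} I would just take expectations in the above representation, giving $\expect{\MN_{s'}(t+1)\mid\bYN(t)}=\tfrac1N\sum_{s,a} N\YN_{s,a}(t)P^a_{s,s'}=(\phi(\bYN(t)))_{s'}$ by the definition~\eqref{eq:Markovian-transition}, which is precisely the unbiasedness claim.

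For \eqref{eq:CLT} the plan is first to bound each coordinate in $L^2$ and then sum. Independence yields
\begin{equation*}
\var{\MN_{s'}(t+1)\mid\bYN(t)}=\frac1{N^2}\sum_{s,a}N\YN_{s,a}(t)P^a_{s,s'}(1-P^a_{s,s'})\le\frac1N(\phi(\bYN(t)))_{s'},
\end{equation*}
so by Jensen's inequality $\expect{|E^{(N)}_{s'}(t)|\mid\bYN(t)}\le\sqrt{(\phi(\bYN(t)))_{s'}/N}$. Summing over $s'$ and applying Cauchy--Schwarz together with $\sum_{s'}(\phi(\bYN(t)))_{s'}=1$ (since $\phi$ maps $\Delta^{2d}$ to $\Delta^d$) produces the bound $\sqrt{d}/\sqrt{N}$.

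For \eqref{eq:hoeffding} I would apply Hoeffding's inequality to each of the sums $N\MN_{s'}(t+1)=\sum_n X_{n,s'}$, which is a sum of $N$ independent variables bounded in $[0,1]$ with mean $N(\phi(\bYN(t)))_{s'}$; this gives $\proba{|E^{(N)}_{s'}(t)|\ge\epsilon/d\mid\bYN(t)}\le 2\exp(-2N\epsilon^2/d^2)$. A union bound over the $d$ coordinates, combined with $\norme{\bEN(t)}\ge\epsilon\Rightarrow \exists s':|E^{(N)}_{s'}(t)|\ge\epsilon/d$, yields the stated inequality. There is no serious obstacle here: the one place to be a little careful is the choice of splitting $\epsilon$ uniformly across coordinates in the union bound, which is what produces the $1/d^2$ in the exponent.
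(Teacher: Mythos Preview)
Your proposal is correct and follows essentially the same approach as the paper: write $N\MN_{s'}(t+1)$ as a sum of $N$ independent indicators conditional on $\bYN(t)$, take expectations for \eqref{eq:unbiased}, bound the variance and apply Cauchy--Schwarz over the $d$ coordinates for \eqref{eq:CLT}, and use Hoeffding per coordinate followed by a union bound for \eqref{eq:hoeffding}. The only cosmetic difference is that the paper writes the indicators via auxiliary uniform variables $U_{s,a,i}$, but the argument is identical.
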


A detailed proof of this result is provided in Section~\ref{ssec:proofs1}.

\subsection{LP-compatibility and  asymptotic  optimality} \label{subsec:LP-compatible}
For a given admissible policy $\pi$, we define  $\Vpi$ as the expected reward (per arm) when the system has $N$ arms and the policy $\pi$ is used. For a policy $\pi$, we also define $V_\pi(\bm(0),T) := \sum_{t=0}^{T-1}\sum_{a,s}R^a_sy^\pi_{s,a}(t)$, where $\by^\pi(t)$ is given by:
\begin{align*}
  \by^\pi(t)&=\pi_t(\bm^\pi(t))\\
  \bm^\pi(t+1)&=\phi(\by(t)).
\end{align*}
We say that a policy $\pi$ is \emph{LP-compatible} if there exists an optimal solution $\byopt$ of the LP \eqref{eq:relaxed_problem}, such that $\pi_t (\bm^*(t)) = \by^*(t)$ for all $0 \le t \le T-1$, where $\mopt_{s}(t)=\yopt_{s,0}(t)+\yopt_{s,1}(t)$. Following the above definition, an admissible policy is LP-compatible if and only if $V_\pi(\bm(0),T) = \rel$.

The following result makes the formal link between LP-compatible policy and asymptotically optimal policies for the $N$-arms bandit problem. In particular, it shows that a continuous policy $\pi$ is asymptotically optimal if and only if it is LP-compatible. In addition, the rate of convergence is $\calON$ when the policy is Lipschitz continuous. Note that this result alone provides necessary and sufficient conditions for asymptotically optimal policy, but does not guarantee the existence of such policies. We will show later in Section~\ref{sec:existenc-and-construction-of-policies} that for all finite horizon RB, there always exists a LP-compatible Lipschitz continuous policy that can be easily constructed.

\begin{thm}  \label{thm:CLT-rate} \
  Let $\pi = \left\{\pi_t \right\}_{0 \le t \le T-1}$ be an admissible and continuous policy. Then:
  \begin{align}
    \label{eq:thm:CLT1}
    \lim_{N\to\infty}\Vpi = V_\pi(\bm(0),T).
  \end{align}
  If in addition $\pi$ is Lipschitz continuous, then there exists a constant $C>0$ independent of $N$ such that
  \begin{align}
    \label{eq:thm:CLT2}
    \abs{\Vpi - V_\pi(\bm(0),T)} \le \frac{C}{\sqrt{N}}.
  \end{align}
  In particular, this implies that:
  \begin{enumerate}
    \item If $\pi$ is LP-compatible, then $\lim_{N\to\infty}\Vpi = \lim_{N\to\infty}\opt = \rel$.
    \item If $\pi$ is not LP compatible, then $\limsup_{N\to\infty}\Vpi < \rel$.
    \item If $\pi$ is LP-compatible and Lipschitz continuous, then there exists $\widetilde{C}>0$ independent of $N$ such that
    \begin{align*}
      \abs{\Vpi - \rel} \le \frac{\widetilde{C}}{\sqrt{N}}.
    \end{align*}
  \end{enumerate}
\end{thm}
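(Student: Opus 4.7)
The plan is to relate, for each $t\in\{0,\dots,T-1\}$, the random vector $\bYN(t)$ produced by the $N$-arm system under $\pi$ to the deterministic $\by^\pi(t)$ defined by the iteration $\by^\pi(t)=\pi_t(\bm^\pi(t))$, $\bm^\pi(t+1)=\phi(\by^\pi(t))$. Since the total reward is the linear functional $\sum_{t,s,a} R^a_s\, y_{s,a}(t)$ of the $\by$-trajectory, controlling $\mathbb{E}[\norme{\bYN(t)-\by^\pi(t)}]$ uniformly in $t$ is enough to prove both \eqref{eq:thm:CLT1} and \eqref{eq:thm:CLT2}. The natural strategy is induction on $t$, propagating a single error $\delta_t(N) := \mathbb{E}[\norme{\bMN(t)-\bm^\pi(t)}]$ through the three-stage diagram \eqref{eq:map-chain}.

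The one-step analysis combines three estimates. First, admissibility of $\pi$ and continuity (respectively Lipschitz continuity with constant $L$) of $\pi_t$ on the compact simplex $\Delta^d$ translates $\delta_t(N)$ into a bound on $\mathbb{E}[\norme{\bY(t)-\by^\pi(t)}]$ where $\bY(t):=\pi_t(\bMN(t))$; in the Lipschitz case this contributes $L\,\delta_t(N)$. Second, the randomized-rounding step of Section \ref{subsec:admissible-policy-and-randomized-rounding} is pathwise close to the identity, $\norme{\bYN(t)-\bY(t)}\le \calO(1/N)$ almost surely, since each $\YN_{s,1}$ differs from $Y_{s,1}$ by at most $1/N$ (and then $\YN_{s,0}$ absorbs the same perturbation via $\YN_{s,0}+\YN_{s,1}=\MN_s$). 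Third, by Lemma \ref{lem:Markovian-transition-analysis} and the fact that $\phi$ is affine and $L^1$-nonexpansive (it is composition with a stochastic kernel), $\mathbb{E}[\norme{\bMN(t+1)-\phi(\by^\pi(t))}] \le \mathbb{E}[\norme{\bYN(t)-\by^\pi(t)}]+\sqrt{d/N}$. Chaining these in the Lipschitz case gives the recursion $\delta_{t+1}(N)\le L\,\delta_t(N)+\calO(1/\sqrt{N})$, which unrolls to $\delta_t(N)=\calON$ with a constant depending on $L^T$, hence \eqref{eq:thm:CLT2}. In the merely continuous case, uniform continuity on $\Delta^d$ together with the $L^1$ bound (and Markov's inequality) yields $\delta_t(N)\to 0$ for every $t$, whence \eqref{eq:thm:CLT1} by bounded convergence applied to the reward sum.

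The three consequences are immediate corollaries. The deterministic trajectory $\by^\pi$ is a feasible point of the LP \eqref{eq:relaxed_problem}, so $V_\pi(\bm(0),T)\le \rel$, with equality precisely when $\pi$ is LP-compatible. Points (1) and (3) then follow from the sandwich $\Vpi\le\opt\le\rel$ combined with \eqref{eq:thm:CLT1}--\eqref{eq:thm:CLT2}, and point (2) follows from the strict inequality $V_\pi<\rel$ together with the limit statement \eqref{eq:thm:CLT1}. The main technical care I anticipate is in the second step of the recursion: verifying that the two-pass randomized rounding indeed perturbs each $Y_{s,1}$ by at most $1/N$ almost surely under the joint law used to enforce $\sum_s Z_s=h$. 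Fortunately, this contribution is $\calO(1/N)$ and is dominated by the $\calON$ martingale noise from the Markovian transition, which ultimately governs the convergence rate.
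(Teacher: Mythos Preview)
Your proposal is correct and follows essentially the same route as the paper: propagate an error $\delta_t(N)=\mathbb{E}\!\left[\norme{\bMN(t)-\bm^\pi(t)}\right]$ through the three-stage diagram \eqref{eq:map-chain} using continuity/Lipschitz of $\pi_t$, the $\calO(1/N)$ pathwise bound on randomized rounding, and the $\calON$ control of $\bEN(t)$ from Lemma~\ref{lem:Markovian-transition-analysis}, then read off the reward gap from the linearity of \eqref{eq:th_conv}. The only cosmetic differences are that the paper phrases the continuous case via convergence in probability plus dominated convergence (rather than your $L^1$ argument via uniform continuity and Markov's inequality), and takes a single Lipschitz constant $L$ for both $\pi$ and $\phi$, getting the recursion with factor $L^2$ instead of your sharper $L$ (since you note $\phi$ is $L^1$-nonexpansive).
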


\begin{proof}{Sketch of proof.} A detailed proof is presented in Section~\ref{ssec:proofs1}. We give here the main ideas.  Recall that $\Vpi=\expect{\sum_{t,a,s}R^a_s\YNpi_{s,a}(t)}$. By using the definition of $V_\pi(\bm(0),T)$ and the linearity of expectation, we have:
  \begin{align}
    \label{eq:th_conv}
    \Vpi - V_\pi(\bm(0),T) = \sum_{t,a,s}R^a_s\left(\expect{\YNpi_{s,a}(t)} - y^\pi_{s,a}(t)\right).
  \end{align}
  Consequently, showing that $\Vpi$ is close to $V_\pi$ is equivalent to showing that $\expect{\YNpi_{s,a}(t)}$ is close to $y^\pi_{s,a}$. In the detailed proof, we show it by recurrence on $t$ using two facts:
  \begin{itemize}
    \item The continuity of $\pi$ guarantees that if $\bm^\pi(t)$ and $\bMNpi(t)$ are close, then so are $\by^\pi(t)$ and $\bYNpi(t)$.
    \item Lemma~\ref{lem:Markovian-transition-analysis} shows that $\bMNpi(t+1) \approx \phi(\bYNpi(t))$, which implies that if $\by^\pi(t)$ and $\bYNpi(t)$ are close then so are $\bm^\pi(t+1)$ and $\bMNpi(t+1)$.\Halmos
  \end{itemize}
\end{proof}

\subsection{Locally linear policy and exponential convergence rate} \label{subsec:locally-linear-policy-and-exponential} As we have shown before, the LP-compatibility is a necessary and sufficient condition for a continuous policy to be asymptotically optimal. In this section, we show that when the policy is locally linear around an optimal solution, then this policy becomes optimal exponentially fast. Note that although LP-compatible policies always exist, this is not always the case for locally linear policies, as we shall see later in Section \ref{sec:existenc-and-construction-of-policies}.

We say that an LP-compatible policy $\pi = \left\{\pi_t \right\}_{0 \le t \le T-1}$ is \emph{locally linear} if there exists a solution $\byopt$ of \eqref{eq:relaxed_problem} such that for all $0 \le t \le T-1$, there exists $\varepsilon_t>0$ such that $\pi_t(\cdot)$ is \emph{linear} (more precisely, \emph{affine}) in the ball of radius $\varepsilon_t$ centered at $\bm^*(t)$, where $m^*_{s}(t) := y^*_{s,0}(t) + y^*_{s,1}(t)$ for all $s$. In other words, for each $t$, there exists a matrix $\mathbf{D} \in \mathbb{R}^{2d \times d}$ such that for all $\bm \in \ball(\bm^*(t),\varepsilon(t))$, we have $\pi_t(\bm) =  \mathbf{D} \cdot (\bm - \bm^*(t)) + \by^*(t)$.

\begin{thm} \label{thm:exponential-rate} \
  Consider a LP-compatible locally linear policy $\pi = \left\{\pi_t \right\}_{0 \le t \le T-1}$. There exists two constants $C_1,C_2>0$ independent of $N$ such that
  \begin{equation*}
    \abs{\Vpi - \rel} \le C_1 e^{-C_2 N}
  \end{equation*}
\end{thm}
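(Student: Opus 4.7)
My plan is to reduce the statement to an exponential control on the first-moment deviation of the arm distribution from the LP trajectory, and then to establish this control by combining local linearity with Hoeffding concentration. Since $\pi$ is LP-compatible, $V_\pi(\bm(0),T)=\rel$, and since the LP is a relaxation $\Vpi\le\opt\le\rel$. It therefore suffices to show $V_\pi(\bm(0),T)-\Vpi=O(e^{-C_2N})$, and by equation~\eqref{eq:th_conv} this reduces in turn to bounding $\bigl|\mathbb{E}[Y^{N,\pi}_{s,a}(t)]-y^*_{s,a}(t)\bigr|=O(e^{-C_2N})$ uniformly in $(t,s,a)$. For each $t$, I let $L_t$ be the linear map satisfying $\pi_t(\bm)=\by^*(t)+L_t(\bm-\bm^*(t))$ on $\ball(\bm^*(t),\varepsilon_t)$, set $\varepsilon:=\min_t\varepsilon_t$, $K:=\max_t\|\phi\circ L_t\|_{\mathrm{op}}$, $D_t:=\bMN(t)-\bm^*(t)$, and $G_t:=\bigcap_{s=0}^{t}\{\bMN(s)\in\ball(\bm^*(s),\varepsilon_s)\}$. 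Since $\bMN(0)=\bm(0)=\bm^*(0)$, $D_0=0$ and $G_0$ holds surely; on $G_t$ the dynamics linearize as
\begin{equation*}
   D_{t+1}=(\phi\circ L_t)\,D_t + \beps^{(N)}(t),\qquad \beps^{(N)}(t):=\bEN(t)+\phi\bigl(\bYN(t)-\pi_t(\bMN(t))\bigr),
\end{equation*}
where the transition noise $\bEN(t)$ is centered and Hoeffding-concentrated by Lemma~\ref{lem:Markovian-transition-analysis}, and the rounding noise is centered given $\bMN(t)$ and deterministically of norm $O(1/N)$.

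The first step will be an exponential bound on $\mathbb{P}(G_{T-1}^c)$. I will pick a Hoeffding tolerance $\eta:=\varepsilon/(2TK^T)$, observe that on the event $H:=\bigcap_{s<T}\{\|\beps^{(N)}(s)\|\le\eta\}$ the deterministic unrolling $\|D_{t+1}\|\le K^T\sum_{s\le t}\|\beps^{(N)}(s)\|$, which is valid inductively on $G_t$, yields $\|D_t\|\le\varepsilon/2\le\varepsilon_t$ for every $t$, so $H\subset G_{T-1}$. Applying \eqref{eq:hoeffding} to each $\beps^{(N)}(s)$ together with a union bound over $s<T$ then delivers $\mathbb{P}(G_{T-1}^c)\le\mathbb{P}(H^c)\le 2dT\,e^{-cN\eta^2}=C_1e^{-C_2N}$ for suitable constants.

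The second step transfers this tail bound into a first-moment bound. Define $u_t:=\mathbb{E}[D_t\,\mathbf{1}_{G_{t-1}}]$. Because $\mathbf{1}_{G_{t-1}}$ is measurable before the noise $\beps^{(N)}(t-1)$ is drawn, the conditional unbiasedness of both the transition and rounding noises gives $\mathbb{E}[\beps^{(N)}(t-1)\mathbf{1}_{G_{t-1}}]=0$; combined with the identity $\mathbb{E}[D_{t-1}\mathbf{1}_{G_{t-1}}]=u_{t-1}-\mathbb{E}[D_{t-1}\mathbf{1}_{G_{t-2}\setminus G_{t-1}}]$, whose last term is bounded in norm by $2\,\mathbb{P}(G_{T-1}^c)$, the affine recursion on $G_{t-1}$ turns into the scalar recursion $u_t=(\phi\circ L_{t-1})u_{t-1}+O(e^{-C_2N})$. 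Iterating from $u_1=0$ and using that $TK^T$ is a constant independent of $N$ yields $\|u_t\|=O(e^{-C_2N})$, hence $\|\mathbb{E}[D_t]\|\le\|u_t\|+2\,\mathbb{P}(G_{t-1}^c)=O(e^{-C_2N})$. Applying the same decomposition to $\bYN(t)=\by^*(t)+L_tD_t+\bigl(\bYN(t)-\pi_t(\bMN(t))\bigr)$ on $G_t$ then produces the required exponential bound on $|\mathbb{E}[Y^{N,\pi}_{s,a}(t)]-y^*_{s,a}(t)|$ and concludes the argument.

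I expect the main obstacle to be the chicken-and-egg character of the linearization: local linearity of $\pi_t$ is only available on the good event $G_t$, while the exponential smallness of $\mathbb{P}(G_t^c)$ itself relies on running the linear recursion. The resolution, sketched above, is to tune the Hoeffding tolerance $\eta$ to a small multiple of $\varepsilon/K^T$ so that a single induction closes the loop; this mechanism also clarifies why both $\varepsilon>0$ and a finite $K$ are essential, and hence why the exponential rate breaks down the moment local linearity is weakened to mere Lipschitz continuity.
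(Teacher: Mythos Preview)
Your proposal is correct and follows essentially the same two-stage architecture as the paper's proof: first establish that the stochastic trajectory stays inside the local-linearity neighborhoods with probability at least $1-C_1e^{-C_2N}$ (via a Hoeffding tolerance tuned to $\varepsilon/K^T$ and a single induction that closes the linearize/stay-in-ball loop), and second exploit linearity of $\phi\circ L_t$ to push the expectation through the recursion and conclude that $\bigl\|\mathbb{E}[\bYN(t)]-\by^*(t)\bigr\|$ is exponentially small. The only notable difference is organizational: the paper conditions on the \emph{global noise event} $\calE(\delta)=\bigcap_t\{\|\bEN(t)\|\le\delta\}$ and then shows a posteriori that $\calE(\delta)$ forces all states into the balls, whereas you condition directly on the \emph{filtration-adapted state events} $G_t=\bigcap_{s\le t}\{\bMN(s)\in\ball(\bm^*(s),\varepsilon_s)\}$. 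Your choice makes the unbiasedness step $\mathbb{E}[\beps^{(N)}(t-1)\mathbf{1}_{G_{t-1}}]=0$ immediate (since $G_{t-1}$ is measurable before the noise at time $t-1$ is drawn), at the small cost of having to track the indicator shift $G_{t-2}\to G_{t-1}$ and absorb the resulting $O(\mathbb{P}(G_{T-1}^c))$ term at each iterate; the paper's global event avoids this bookkeeping but, strictly read, requires a similar correction because $\calE(\delta)$ is not independent of $\bEN(t-1)$. Both routes lead to the same exponential bound with constants of the same order.
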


We remark that the result of exponential convergence rate in Theorem \ref{thm:exponential-rate} is much stronger than the general square root rate given in Theorem \ref{thm:CLT-rate}. This is due to the locally linear condition. This local linearity around the optimal trajectory plays a key role in the proof of Theorem \ref{thm:exponential-rate}, as it is used in \eqref{eq:exchange-expectation} to justify the interchange of taking expectation with applying a linear function, in order to obtain \eqref{eq:exp-estimation1}. Our later discussion in Section \ref{subsec:necessary-condition-for-exponential} actually indicates that the local linearity is essentially necessary to obtain the exponential rate. A second key ingredient in the proof is the concentration inequality \eqref{eq:Hoeffding-estimation}, which relies on the fact that the $N$ arms are exchangeable. For the more general model where each arm of the bandit has its own state space (this has been considered in \citet{Brown2020IndexPA}), it is an interesting open question to see if we can formulate an exponential convergence type result in such generic case.

\subsection{Proof of results in Section \ref{sec:a-hierarchy-of-policies}}
\label{ssec:proofs1}

\subsubsection{Proof of Lemma~\ref{lem:Markovian-transition-analysis}} \label{subsubsec:proof1}

For simplicity of notation, let us denote by $\by := \bYN(t)$. There are $Ny_{s,a}$ arms in state $s$ and whose action is $a$ and each of these arms makes a transition to  state $s'$ with probability $P^a_{s,s'}$. This shows that $\MN(t+1)$ can be written as a sum of independent random variables as follows:
\begin{align*}
  \MN_{s'}(t+1) = \frac1N\sum_{s,a} \sum_{i=1}^{Ny_{s,a}} \mathbf{1}_{\{U_{s,a,i}\le P^a_{s,s'}\}},
\end{align*}
where the variables $U_{s,a,i}$ are i.i.d uniform random variable in $[0,1]$. Taking expectation then gives $\expect{\MN_{s'}(t+1) \mid \bYN(t)}=(\phi(\bYN(t)))_{s'}$, which gives \eqref{eq:unbiased}. It also implies that
\begin{align*}
  \expect{|E^{(N)}_{s'}(t+1)|^2 \mid \bYN(t)=\by} &= \var{\MN_{s'}(t+1)\mid \bYN(t)=\by}\\
  & = \frac1{N^2}\sum_{s,a}Ny_{s,a} P^a_{s,s'}(1-P^a_{s,s'}) \le \frac{\sum_{s,a}y_{s,a} P^a_{s,s'}}{N}.
\end{align*}
This shows that
\begin{align*}
  \expect{\norme{\bEN(t+1)} \mid \bYN(t)=\by} \le  \sqrt{d} \frac{ \sqrt{\sum_{s'} \sum_{s,a}y_{s,a}P^a_{s,s'}}}{\sqrt{N}} = \frac{\sqrt{d}}{\sqrt{N}},
\end{align*}
where the first inequality comes from Cauchy-Schwartz, and this gives \eqref{eq:CLT}.

Equation~\eqref{eq:hoeffding} is a direct consequence of Hoeffding's inequality. Indeed, one has
\begin{equation*}
  \proba{|E^{(N)}_s(t)|\ge\varepsilon/d \mid \bYN(t)} \le 2e^{-N\varepsilon^2/d^2}.
\end{equation*}
By using the union bound, this implies that
\begin{equation*}
  \proba{\norme{\bEN(t)}|\ge\varepsilon \mid \bYN(t)}\le d \cdot \proba{|E^{(N)}_s(t)|\ge\varepsilon/d \mid \bYN(t) } \le 2de^{-N\varepsilon^2/d^2}.
\end{equation*}

\subsubsection{Proof of Theorem~\ref{thm:CLT-rate}} \label{subsubsec:proof2}


  Let $\pi$ be a continuous policy.  We will first show by induction on $t$ that $\bMNpi(t)$ converges to $\bm^\pi(t)$ in probability as $N$ goes to infinity. This clearly holds for $t=0$ because $\bm^\pi(0)=\bMNpi(0)=\bm(0)$. Assume that this holds for some $t\ge0$, and let us show that this implies $\bYNpi(t)$ also converges to $\by^\pi(t)$ in probability. Indeed, we have
  \begin{align}
    \label{eq:th_conv.proof1}
    \norm{\by^\pi(t) - \bYNpi(t)} \le \norm{\pi_t(\bm^\pi(t)) - \pi_t(\bMNpi(t))} + \norm{\pi_t(\bMNpi(t)) - \bYNpi(t)}.
  \end{align}
  By construction of randomized rounding, $\norm{\pi_t(\bMNpi(t)) - \bYNpi(t)} \le d/N$. This shows that, by continuity of $\pi_t(\cdot)$, if $\bMNpi(t)$ converges in probability to $\bm^\pi(t)$, then $\bYNpi(t)$ also converges to $\by^\pi(t)$ in probability.

  For $\bMNpi(t+1)$ and $\bm^\pi(t+1)$, we have
  \begin{align}
    \label{eq:th_conv.proof2}
    \norm{\bm^\pi(t+1) - \bMNpi(t+1)} \le \norm{\phi(\by^\pi(t)) - \phi(\bYNpi(t))} + \norm{\bEN(t)}
  \end{align}
  As $\phi$ is continuous and $\bEN(t)$ converges to $\mathbf{0}$ in probability, this implies that $\bMNpi(t+1)$ converges to $\bm^\pi(t+1)$ in probability. This concludes the induction step. Consequently, $\bYNpi(t)$ converges in probability to $\by^\pi(t)$. As $\YNpi_{s,a}(t)\in[0,1]$ are bounded, the dominated convergence theorem implies that  $\lim_{N\to\infty}\expectpi{\YNpi_{s,a}(t)} = y^{\pi}_{s,a}(t)$, which by \eqref{eq:th_conv} implies \eqref{eq:thm:CLT1}.

  Assume now that for all $t$, $\pi_t$ is Lipschitz continuous. As $\phi$ is linear, $\phi$ is also Lipschitz continuous. Let $L$ be an upper bound on the Lipschitz constants of $\pi$ and $\phi$. Applying \eqref{eq:th_conv.proof2}, Lemma~\ref{lem:Markovian-transition-analysis} and \eqref{eq:th_conv.proof1}, we have:
  \begin{align*}
    \expect{\norm{\bm^\pi(t+1) - \bMNpi(t+1)}} &\le \expect{\norm{\phi(\by^\pi(t)) - \phi(\bYNpi(t))}} + \expect{\norm{\bEN(t)}}\\
    &\le L \expect{\norm{\by^\pi(t) - \bYNpi(t)}} + \sqrt{\frac{d}{N}}\\
    &\le L^2 \expect{\norm{\bm^\pi(t) - \bMNpi(t)}} + \frac{Ld}{N} + \sqrt{\frac{d}{N}}.
  \end{align*}
  By a direct induction on $t$ (which is essentially the discrete Gronwall's lemma), this implies that $\expect{\norm{\bm^\pi(t+1) - \bMNpi(t+1)}} = \calON$. Note however that the hidden constant in the $\mathcal{O}(\cdot)$ grows exponentially with time $t$. By \eqref{eq:th_conv}, this implies \eqref{eq:thm:CLT2}.

  To conclude the proof, one should note that a policy $\pi$ is LP-compatible if and only if $V_\pi(\bm(0),T) = \rel$. \Halmos

\subsubsection{Proof of Theorem~\ref{thm:exponential-rate}} \label{subsubsec:proof3}

  Let $\varepsilon:=\min_t \varepsilon_t$, and let $F_t:\Delta^d \rightarrow \Delta^{2d}$ be the linear function such that $\pi_t(\bm)=F_t(\bm)$ for $\bm\in\ball(\bm^*(t),\varepsilon)$. Denote by $\ell>0$ the Lipschitz constant of the linear map $\phi(\cdot)$, and by $L_t>0$ the Lipschitz constant of $F_t$ and write $L:= \max_t L_t$.

  Let $\delta:= \varepsilon/ (2 \left(1+\ell L+\dots+(\ell L)^T \right))$, and let us denote by $\calE(\delta)$ the event:
  \begin{equation*}
    \calE(\delta) :=  \left\{\text{for all $0 \le t \le T-1$: $\norm{\bEN(t)\le\delta}$}\right\},
  \end{equation*}
  where $\bEN(t)$ is defined as in Lemma~\ref{lem:Markovian-transition-analysis}, and let $\overline{\calE(\delta)} $ be the complementary of the event $\calE(\delta)$.

  By \eqref{eq:hoeffding} of Lemma \ref{lem:Markovian-transition-analysis}, we have
  \begin{equation}\label{eq:Hoeffding-estimation}
    \proba{\overline{\calE(\delta)}}  \le 2dT \cdot e^{-2N \delta^2/d^2}.
  \end{equation}

  Assume that event $\calE(\delta)$ holds. By definition of $\bEN(t)$ and \eqref{eq:map-chain}, we have
  \begin{align}
    \norme{\bMN(t+1) - \bm^*(t+1)}  &= \norme{\phi(\bYN(t)) + \bEN(t) - \phi(\pi_t(\bm^*(t)))} \nonumber \\
    \qquad & \le \norme{\phi(\bYN(t)) - \phi(\bY(t))} + \norme{\phi(\bY(t)) - \phi(\pi_t(\bm^*(t)))} + \norm{\bEN(t)} \nonumber \\
    \qquad & = \norme{\phi(\bYN(t)) - \phi(\bY(t))}  \nonumber \\
    \qquad &  \quad + \norme{\phi(\pi_t(\bMN(t))) - \phi(\pi_t(\bm^*(t)))} + \norm{\bEN(t)}  \nonumber \\
    \qquad & \le \frac{2d \ell}{N} + \ell L \cdot \norme{\bMN(t) - \bm^*(t)} + \delta. \label{eq:rounding-error-avoided}
  \end{align}
  A direct induction until $t=0$ then implies
  \begin{equation*}
    \norme{\bMN(t+1) - \bm^*(t+1)} \le \left( 1+\ell L+\dots+(\ell L)^t \right) \cdot (\delta + \frac{2d \ell}{N}).
  \end{equation*}
  This implies that $\bMN(t)$ is inside $\ball(\bm^*(t),\varepsilon)$ for all $0 \le t \le T-1$ and $N \ge 2d\ell / \delta$. As a side note, the term $2d \ell /N$ in \eqref{eq:rounding-error-avoided} and the assumption $N \ge 2d\ell / \delta$ will not appear, if the locally linear policy can be constructed as a time-dependent priority policy, as in Proposition~\ref{th:rankable} for rankable finite horizon RB, since then no randomized rounding is needed anywhere and $\bYN(t)=\bY(t)$ always holds.

  Consequently, we get:
  \begin{align}
  \expect{\bYN(t)\mathbf{1}_{\{\calE(\delta)\}}} -\by^*(t) \nonumber
   &= \expect{F_t(\bMN(t))\mathbf{1}_{\{\calE(\delta)\}}}-F_t(\bm^*(t)) \\ \nonumber
   &= \expect{F_t\left(\phi(\bYN(t-1)\mathbf{1}_{\{\calE(\delta)\}})\right)}-F_t\left(\phi(\by^*(t-1)\right)\\ \label{eq:exchange-expectation}
   &= F_t\circ\phi\left(\expect{\bYN(t-1)\mathbf{1}_{\{\calE(\delta)\}}}-\by^*(t-1)\right),
  \end{align}
  where on the last equality \eqref{eq:exchange-expectation} we have interchanged the expectation $\expectpi{\cdot}$ with $F_t \circ \phi (\cdot)$, which is possible since the later is a linear map. A direct induction on $t$ then implies that
\begin{align} \label{eq:exp-estimation1}
    \norme{\expect{\bYN(t)\mathbf{1}_{\{\calE(\delta)\}}} -\by^*(t)} \nonumber
   &\le L' \norme{\expect{\bYN(t-1)\mathbf{1}_{\{\calE(\delta)\}}} -\by^*(t-1)}\\
   &\le (L')^T \norme{\expect{\bYN(0)\mathbf{1}_{\{\calE(\delta)\}}} -\by^*(0)}.
\end{align}
where $L'$ is an upper bound on the Lipschitz constants of maps $F_t\circ\phi(\cdot)$ for $0 \le t \le T-1$. Moreover by \eqref{eq:Hoeffding-estimation}, we have
\begin{equation} \label{eq:exp-estimation2}
  \norme{\expect{\bYN(t)} - \expect{\bYN(t)\mathbf{1}_{\{\calE(\delta)\}}}} \le 2d \cdot \proba{\bar{\calE}(\delta)}\le 4d^2 T e^{-C_2 N},
\end{equation}
where $C_2 := 2\varepsilon^2/((1+\dots+L^{T-1})^2 d^2)$. Combining \eqref{eq:exp-estimation1} and \eqref{eq:exp-estimation2} gives
\begin{equation*}
    \norme{ \expect{\bYN(t)} - \by^*(t)} \le C_1 e^{-C_2 N},
  \end{equation*}
where we may choose $C_1 := 4 d^2 T^2 (1+(L')^T)$.  Consequently, by \eqref{eq:th_conv}, all locally linear LP-compatible policies are asymptotically optimal with exponential rate, and this concludes our proof.  \Halmos


\section{\bfseries\scshape{Existence and construction of policies}} \label{sec:existenc-and-construction-of-policies}

In this section we provide constructions of Lipschitz continuous policies and locally linear policies, defined in the previous Section \ref{sec:a-hierarchy-of-policies}. In Section \ref{subsec:def-nondegenerate} we define the non-degenerate and rankable RB. In Section \ref{subsec:water-filling}, we introduce the idea of "water-filling", and show that the policies induced by "water-filling" are LP-compatible Lipschitz continuous policies, and are furthermore locally linear policies if the RB is non-degenerate. We compare the non-degenerate condition with the rankable condition in Section \ref{subsubsec:rankable-vs-non-degenerate}. In Section \ref{subsec:necessary-condition-for-exponential}, we construct a degenerate $2$-dimensional RB over which no policy converges asymptotically fast to the LP solution. This implies that non-degeneracy is a necessary condition for the exponential convergence rate in general. Proofs of Theorem \ref{th:rankable} and Lemma \ref{lem:water-filling} are given respectively in Section \ref{subsubsec:proof4} and \ref{subsubsec:proof5}.

\subsection{Non-degenerate and rankable RB} \label{subsec:def-nondegenerate}

Let $\byopt$ be an optimal solution of the LP relaxed problem~\eqref{eq:relaxed_problem}. For each time $t$, we partition the set $\calS$ into four sets $\splus(t)$, $\szero(t)$, $\sminus(t)$ and $\sempty(t)$ as follows:
\begin{align*}
  \splus(t) &:= \{s \in \mathcal{S} \mid y^*_{s,1}(t) > 0 \ \text{and} \ y^*_{s,0}(t) = 0 \};\\
  \szero(t) &:= \{s \in \mathcal{S} \mid y^*_{s,1}(t) > 0 \ \text{and} \ y^*_{s,0}(t) > 0 \};\\
  \sminus(t)&:= \{s \in \mathcal{S} \mid y^*_{s,1}(t) = 0 \ \text{and} \ y^*_{s,0}(t) > 0 \};\\
  \sempty(t)&:= \{s \in \mathcal{S} \mid y^*_{s,1}(t) = 0 \ \text{and} \ y^*_{s,0}(t) = 0 \}.
\end{align*}
The intuition behind this partition is as follows: For the optimal relaxed solution $\by^*$, at time $t$, it is optimal to activate all arms whose state is in $\splus(t)$, a fraction of those whose state is in $\szero(t)$, and none of those whose state is in $\sminus(t)$. Also note that the optimal solution is such that at time $t$, there are no arms whose state is in $\sempty(t)$: for all $s\in\sempty(t)$, we have $m^*_{s}(t)=y^*_{s,0}(t)+y^*_{s,1}(t)=0$.

Following this intuitive definition, we construct below a LP-compatible Lipschitz continuous  policy that activates in priority the arms in set $\splus(t)$, then the ones in $\szero(t)$ and then the ones in $\sminus(t)$. As we shall see below, one has to be careful on how to deal with the arms in $\szero(t)$.

Before defining the water-filling policy, and for reasons that will become clear in Theorem~\ref{th:rankable} and Theorem~\ref{th:water-filling}, we introduce two definitions:
\begin{enumerate}
  \item A RB is \emph{rankable} if there exists an optimal solution of \eqref{eq:relaxed_problem} for which $\abs{\szero(t)}\le 1$ for all $t$. Otherwise we call this RB \emph{non-rankable}.
  \item A RB is \emph{non-degenerate} if there exists an optimal solution $\byopt$ of \eqref{eq:relaxed_problem} for which $\abs{\szero(t)}\ge 1$ for all $t$. Otherwise we call this RB \emph{degenerate}. This definition coincides with the one in \citet{zhang2021restless}.
\end{enumerate}

At first glance it appears that rankable and non-degenerate RB's are complementary to each other. Surprisingly, it turns out that in practice these two conditions are \emph{almost} equivalent when the solution of the LP is unique, as indicated by the next proposition. Moreover, this proposition also shows that when there are multiple solutions, testing if a problem is non-degenerate can be done by computing an interior solution. These two claims are stated formally in the next result, that we prove and comment in Section~\ref{subsubsec:rankable-vs-non-degenerate}.

\begin{prop}
  \label{th:rankable_or_degenerated} \
  \begin{enumerate}
    \item Consider a RB for which the LP problem~\eqref{eq:relaxed_problem} has a unique solution. If this RB is non-rankable, then it is degenerate.
    \item Consider a RB for which the LP problem~\eqref{eq:relaxed_problem} has multiple solutions. Denote by $\Theta$ the optimal face of the LP (i.e. the set of all optimal solutions). Then the RB is non-degenerate if and only if there exists an optimal solution $\byopt$ in the relative interior of $\Theta$ for which (by using this solution) $\abs{\szero(t)}\ge 1$ for all $t$. 
  \end{enumerate}

\end{prop}
We say that a policy is a \emph{(time-dependent) priority policy} if for all time $t$, there exists a permutation $\sigma=\sigma_1\dots\sigma_d$ of the states (that depends on $t$) such that the policy activates first the arms in state $\sigma_1$, then the ones in state $\sigma_2$, etc. up to activating a fraction $\alpha$ of arms. In other words, if the arm configuration vector at time $t$ is $\bm \in \Delta^d$, then the policy will activate $y_{s,1}$ arms in state $s$, where for all $i \in \{1\dots d\}$, $y_{\sigma_i,1}$ is defined as:
\begin{align}
  \label{eq:piecewise_affine}
  y_{\sigma_i, 1} := \pi^{\mathrm{priority}(\sigma)}_{\sigma_i,1}(\bm) = \min( m_{\sigma_i}, \alpha - \sum_{j=1}^{i-1}y_{\sigma_j, 1}).
\end{align}
The next theorem justifies the notion of rankable RB.
\begin{thm}\label{th:rankable} \
  A RB is rankable if and only if there exists a time-dependent priority policy that is asymptotically optimal.
\end{thm}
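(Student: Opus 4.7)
The plan is to prove both implications directly, leveraging Theorem \ref{thm:CLT-rate} and the explicit piecewise-linear structure of priority policies defined in \eqref{eq:piecewise_affine}.

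For the ``if'' direction, I would start from an asymptotically optimal time-dependent priority policy $\pi$ with ordering $\sigma^t = (\sigma^t_1, \ldots, \sigma^t_d)$ at each time $t$. The first observation is that $\pi$ as defined by \eqref{eq:piecewise_affine} is continuous (in fact $1$-Lipschitz, since each coordinate is a nested $\min$ of affine maps with unit slopes). Therefore Theorem \ref{thm:CLT-rate} applies and forces $\pi$ to be LP-compatible: there exists an optimal LP solution $\byopt$ of \eqref{eq:relaxed_problem} such that $\pi_t(\bmopt(t)) = \byopt(t)$ for every $t$. Now I would read off the structure of this $\byopt$ directly from \eqref{eq:piecewise_affine}. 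Let $i_t$ be the smallest index at which the cumulative mass $\sum_{j \le i_t} m^*_{\sigma^t_j}(t)$ reaches $\alpha$. Then for $j < i_t$ we have $y^*_{\sigma^t_j,1}(t) = m^*_{\sigma^t_j}(t)$ and $y^*_{\sigma^t_j,0}(t) = 0$, for $j > i_t$ we have $y^*_{\sigma^t_j,1}(t) = 0$, and only the critical state $\sigma^t_{i_t}$ can simultaneously satisfy $y^*_{\sigma^t_{i_t},1}(t) > 0$ and $y^*_{\sigma^t_{i_t},0}(t) > 0$. In other words $\szero(t) \subseteq \{\sigma^t_{i_t}\}$, hence $|\szero(t)| \le 1$, which by definition means the RB is rankable.

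For the ``only if'' direction, assume the RB is rankable and fix an optimal LP solution $\byopt$ with $|\szero(t)| \le 1$ for each $t$. I would define the priority ordering $\sigma^t$ at each time by putting the states of $\splus(t)$ first (in any internal order), then the at-most-one state of $\szero(t)$, then the states of $\sminus(t)$, and finally the states of $\sempty(t)$. Let $\pi$ denote the resulting time-dependent priority policy via \eqref{eq:piecewise_affine}. The key verification is that $\pi_t(\bmopt(t)) = \byopt(t)$, checked state-by-state: (a) every $s \in \splus(t)$ satisfies $m^*_s(t) = y^*_{s,1}(t)$ and is fully activated; (b) using $\sum_s y^*_{s,1}(t) = \alpha$ and the priority order, the residual budget when arriving at the unique $s \in \szero(t)$ is exactly $\alpha - \sum_{s'\in\splus(t)} m^*_{s'}(t) = y^*_{s,1}(t)$, which is strictly less than $m^*_s(t) = y^*_{s,0}(t)+y^*_{s,1}(t)$, so the $\min$ in \eqref{eq:piecewise_affine} yields exactly $y^*_{s,1}(t)$; (c) states of $\sminus(t)$ and $\sempty(t)$ receive zero activation, matching $y^*_{s,1}(t)=0$. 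This establishes LP-compatibility of $\pi$. Since priority policies are Lipschitz continuous, Theorem \ref{thm:CLT-rate} part (3) delivers asymptotic optimality (in fact at rate $\calON$).

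The bulk of the argument is a short bookkeeping exercise, so I do not expect a serious obstacle. The subtle point is the case analysis at the critical state in step (b) of the ``only if'' direction: one must use the global constraint $\sum_s y^*_{s,1}(t) = \alpha$ together with $y^*_{s,0}(t) > 0$ for $s \in \szero(t)$ to conclude that the water-filling cutoff lands precisely inside this state rather than jumping over it. Once this identity is checked, both directions reduce to applying Theorem \ref{thm:CLT-rate} and reading off the partition $(\splus,\szero,\sminus,\sempty)$ induced by the resulting LP solution.
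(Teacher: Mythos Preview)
Your proposal is correct and follows essentially the same approach as the paper. The paper phrases the ``if'' direction by contrapositive (assume non-rankable, show no priority policy can be LP-compatible), while you argue directly (asymptotically optimal priority $\Rightarrow$ LP-compatible by Theorem~\ref{thm:CLT-rate} $\Rightarrow$ at most one state with both actions positive), but the logical content is identical and the ``only if'' direction matches the paper's construction verbatim.
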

The proof of this result is postponed to Section~\ref{subsubsec:proof4}. As we shall see later, one can use any order inside $\splus(t)$ or $\sminus(t)$ and still obtain an asymptotically optimal policy (although some orders are better than others as we elaborate in Section~\ref{subsec:LP-indices} and Section~\ref{subsec:tie-solving-check}). Theorem~\ref{th:rankable} shows that one has to be careful on dealing with the states in $\szero(t)$: if the RB is non-rankable, i.e. if $\abs{\szero(t)} > 1$ for some $t$, one cannot simply use a fixed priority order between those states at time $t$ to obtain an asymptotically optimal policy. To do so, we shall introduce the idea of "water-filling".

\subsection{The water-filling policy} \label{subsec:water-filling}

At time $t$, the water-filling policy observes $\bMN(t) \in \Delta^d$ and decides $\bY(t) \in \Delta^{2d}$, where $Y_{s,1}(t)$ is the expected fraction of arms that are in state $s$ and should be activated (recall that $\bYN(t)$ is then generated from $\bY(t)$ by applying randomized rounding). This policy works as follows.  For ease of notation, we drop momentarily the $t$ from the notations and we assume that the states are ordered so that the first $\abs{\splus}$ states are in $\splus$, the next $\abs{\szero}$ states are in $\szero$, the next $\abs{\sminus}$ states are in $\sminus$, and finally the rest are in $\sempty$. We view the states as $d$ buckets enumerated from $1$ to $d$, where bucket number $1 \le s \le d$ has capacity $\MN_s$ and $\alpha$ is the total quantity of water that needs to be poured into these buckets. For each time $t$, we fill the buckets following the rules described below and in Algorithm \ref{algo:water-filling}:
\begin{enumerate}
  \item We first activate all arms in $\splus$ by using a strict priority order on the states $1, \cdots, \abs{\splus}$. The only constraint is to activate no more than what we have, i.e. $Y_{s,1} \le \MN_s$ for $s\in\splus$ (the first "for" loop in Algorithm \ref{algo:water-filling});
  \item If there is still some water left, we then activate states in $\szero$ by using a {\it reversed} priority order on the states, namely  $\abs{\splus}+\abs{\szero}, \cdots, \abs{\splus}+1$ with the constraint that $Y_{s,1} \le \min(\MN_s,y^*_{s,1})$ for $s \in \szero$ (the second "for" loop in Algorithm \ref{algo:water-filling});
  \item If there is still some water left, we then complete by activating states in $\szero$ and then in $\sminus$ and then in $\sempty$ by using the priority order $\abs{\splus}+1, \cdots, d$ (the third "for" loop in Algorithm \ref{algo:water-filling}).
\end{enumerate}
Note that if for all $t$ we have $\abs{\szero(t)} \le 1$, the water-filling policy becomes a time-dependent priority policy. The next lemma shows that the water-filling policy is LP-compatible Lipschitz continuous, and is furthermore locally linear if the RB is non-degenerate. As will be explained later, the reversed priority order in the second step above is essential for this local linearity.

\begin{algorithm}[hbtp]
\SetAlgoLined
\SetKwInput{KwInput}{Input}
\KwInput{Time horizon $T$ and initial configuration vector $\bMN(0) = \bm(0)$.}
  Solve the linear program \eqref{eq:relaxed_problem} with time horizon $T$ and initial configuration vector $\bm(0)$, obtain an optimal solution $\by^*$ \;
  Order the states so that the first $\abs{\splus}$ states are in $\splus$, the next $\abs{\szero}$ states are in $\szero$, the next $\abs{\sminus}$ states are in $\sminus$, and the rest are in $\sempty$ \;
 \For{$t = 0,1,2,\dots,T-1$}{
     Initialize $\wa := \alpha$ and $Y_{s,1}(t):=0$ for $s \in \calS$ \;
     \For{$s = 1,2,\dots,\abs{\splus}$}{$Y_{s,1}(t):=\min(\MN_s(t),\wa)$ \;
                                        $\wa \mathrel{-}= Y_{s,1}(t)$ \;
       }
     \For{$s = \abs{\splus}+\abs{\szero}$, $\abs{\splus}+\abs{\szero}-1$, \dots, $\abs{\splus}+1$}{$Y_{s,1}(t):=\min(\MN_s(t), \wa, y^*_{s,1}(t))$ \;
                                                                                                   $\wa \mathrel{-}= Y_{s,1}(t)$ \;
       }
     \For{$s = \abs{\splus}+1$, $\abs{\splus}+2$, \dots, $d$}{$\emph{second-pass} := \min(\MN_s(t)-Y_{s,1}(t),\wa)$ \;
                                                              $Y_{s,1}(t) \mathrel{+}= \emph{second-pass}$ \;
                                                              $\wa \mathrel{-}=  \emph{second-pass}$  \;
     }
   Set $Y_{s,0}(t) := \MN_s(t) - Y_{s,1}(t)$ for $s \in \calS$ \;
   Obtain $\bYN(t)$ from $\bY(t)$ as in \eqref{eq:map-chain}, apply randomized rounding described in Section \ref{subsec:admissible-policy-and-randomized-rounding} if necessary \;
   Use actions $\YN_{s,a}(t)$ over all arms to advance to the next time-step\;
  }
 \caption{The water-filling policy.}
 \label{algo:water-filling}
\end{algorithm}


\begin{lem} \label{lem:water-filling} \
  For any finite horizon RB, the water-filling policy described above is a LP-compatible Lipschitz continuous  policy. Moreover, if the RB is non-degenerate, i.e. if for all $t$, $\abs{\szero(t)}\ge1$, then the water-filling policy is a LP-compatible locally linear  policy. And if the RB is degenerate, then there is no LP-compatible locally linear  policy.
\end{lem}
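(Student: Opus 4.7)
The plan is to prove the three claims in order: (i) LP-compatibility and Lipschitz continuity of water-filling on any RB, (ii) local linearity when the RB is non-degenerate, and (iii) impossibility of an LP-compatible locally linear policy when the RB is degenerate. For (i), LP-compatibility is verified by tracing the three steps of water-filling at input $\bm=\bm^*(t)$. Using $m^*_s(t)=y^*_{s,1}(t)$ for $s\in\splus(t)$ and $\sum_{s\in\splus(t)}y^*_{s,1}(t)+\sum_{s\in\szero(t)}y^*_{s,1}(t)=\alpha$, step~1 fully activates each state of $\splus(t)$ and leaves exactly $\sum_{s\in\szero(t)}y^*_{s,1}(t)$ units of water; step~2 (reverse order with cap $y^*_{s,1}(t)$, which binds before $M_s=y^*_{s,0}(t)+y^*_{s,1}(t)$ does) then assigns $y^*_{s,1}(t)$ to each $s\in\szero(t)$, exhausting the water before step~3 begins. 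Combined with $Y_{s,0}=M_s-Y_{s,1}$, this reproduces $\by^*(t)$. Lipschitz continuity follows because each step composes only $\min$, addition, and subtraction operations, each of which is $1$-Lipschitz.

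For (ii), fix $t$ and pick $\bm$ in a small ball around $\bm^*(t)$. In step~1, the remaining water just before processing any $s\in\splus(t)$ strictly exceeds $M_s$: at $\bm^*(t)$ the slack equals $\sum_{s'\in\szero(t)}y^*_{s',1}(t)+\sum_{s''\in\splus(t),\,s''>s}y^*_{s'',1}(t)>0$, strictly positive by $|\szero(t)|\ge1$, and it persists on a neighborhood. Hence $Y_{s,1}=M_s$, linear in $\bm$. In step~2, processed in reverse order, every $s\in\szero(t)$ except the very last one satisfies $M_s>y^*_{s,1}(t)$ (since $y^*_{s,0}(t)>0$) and has remaining water strictly exceeding $y^*_{s,1}(t)$, giving $Y_{s,1}=y^*_{s,1}(t)$ (constant). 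For the state $s_0$ of $\szero(t)$ processed last in step~2 (i.e., the first in forward order), steps~2 and~3 jointly allocate all the remaining water, yielding the linear expression $Y_{s_0,1}=\alpha-\sum_{s\in\splus(t)}M_s-\sum_{s\in\szero(t)\setminus\{s_0\}}y^*_{s,1}(t)$. No water reaches $\sminus(t)\cup\sempty(t)$ locally, so $Y_{s,1}=0$ there.

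For (iii), suppose toward contradiction that $\pi$ is LP-compatible and locally linear, with reference optimal solution $\byopt$. Degeneracy applied to this $\byopt$ yields some $t_0$ with $\szero(t_0)=\emptyset$. Using $\sum_s m^*_s(t_0)=1$, $\sum_{s\in\splus(t_0)}m^*_s(t_0)=\alpha\in(0,1)$, and $m^*_s(t_0)=0$ for $s\in\sempty(t_0)$, both $\splus(t_0)$ and $\sminus(t_0)$ are non-empty, so pick $s_1\in\splus(t_0)$, $s_2\in\sminus(t_0)$. For small $\delta>0$, let $\beps$ have entries $\varepsilon_{s_1}=\delta$, $\varepsilon_{s_2}=-\delta$, and zero elsewhere; both $\bm^*(t_0)\pm\beps$ lie in $\Delta^d$ since $m^*_{s_1}(t_0),m^*_{s_2}(t_0)>0$. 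Write $\pi_{t_0}(\bm)=\by^*(t_0)+A(\bm-\bm^*(t_0))$ locally for a linear map $A$. Applying the admissibility bounds $0\le Y_{s,1}\le M_s$ to both $+\beps$ and $-\beps$, and using $y^*_{s_1,1}(t_0)=m^*_{s_1}(t_0)$, a two-sided sandwich forces $(A\beps)_{s_1,1}=\delta$; an analogous sandwich forces $(A\beps)_{s,1}=0$ in each of the four cases $s=s_2$, $s\in\splus(t_0)\setminus\{s_1\}$, $s\in\sminus(t_0)\setminus\{s_2\}$, and $s\in\sempty(t_0)$. Summing yields $\sum_s(A\beps)_{s,1}=\delta\ne0$, contradicting the admissibility identity $\sum_s Y_{s,1}=\alpha$.

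The main obstacle is the case analysis in (iii); it relies on (a) verifying non-emptiness of both $\splus(t_0)$ and $\sminus(t_0)$ so that the symmetric perturbations $\pm\beps$ both remain in the simplex, and (b) executing the squeeze uniformly across all four regions to annihilate $(A\beps)_{s,1}$ away from $s_1$. The positive results (i) and (ii) are essentially direct symbolic verifications of the water-filling procedure, with non-degeneracy precisely ensuring that the strict slack needed (both $R_{s-1}>M_s$ in step~1 and the strict inequality on the remaining water exceeding $y^*_{s,1}(t)$ for the non-last states of step~2) survives under small perturbations.
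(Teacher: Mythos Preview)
Your proof is correct. Parts (i)--(ii) take a more direct route than the paper: you verify LP-compatibility and local linearity of water-filling by tracing the three steps explicitly and checking that each branch of every $\min$ resolves the same way throughout a neighborhood of $\bm^*(t)$. The paper instead introduces an auxiliary state space $\shat$ obtained by duplicating all but one state of $\szero(t)$, and observes that water-filling factors as (split into $\shat$) $\circ$ (strict priority on $\shat$) $\circ$ (merge back); Lipschitz continuity and local linearity are then inherited from the known piecewise-affine structure of priority policies. Your approach is more elementary and self-contained; the paper's buys a cleaner structural picture and reuses the priority-policy analysis already done for Theorem~\ref{th:rankable}.

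For (iii) the two arguments are essentially the same squeeze: admissibility at $\bm^*(t_0)\pm\beps$ forces the linear part to satisfy $(A\beps)_{s,1}=\varepsilon_s$ on $\splus(t_0)$ and $(A\beps)_{s,1}=0$ elsewhere. You then contradict $\sum_s Y_{s,1}=\alpha$ directly by summing (getting $\delta\neq0$); the paper instead infers that the affine map must activate all of $\splus(t_0)$ and then exhibits a nearby $\bm$ with $\sum_{s\in\splus(t_0)} m_s>\alpha$. Your choice of a single two-entry perturbation supported on $\splus(t_0)\cup\sminus(t_0)$ is a nice touch, since it sidesteps the issue that $m^*_s(t_0)=0$ on $\sempty(t_0)$ would otherwise obstruct the $\pm\beps$ symmetry.
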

The proof of Lemma~\ref{lem:water-filling} is postponed to Section~\ref{subsubsec:proof5}. A direct consequence of this lemma, combined with Theorem~\ref{thm:CLT-rate} and Theorem~\ref{thm:exponential-rate} is that the water-filling policy is asymptotically optimal at rate at least $\calON$.
\begin{thm}  \label{th:water-filling} \
  Consider a finite horizon RB. For any water-filling policy $\pi$ described in Algorithm \ref{algo:water-filling}, there exists $C>0$ independent of $N$ such that
  \begin{align}
    \label{eq:Gronwall-type-estimation}
    \abs{\Vpi - \rel}\le \frac{C}{\sqrt{N}}.
  \end{align}
  Moreover, if the RB is non-degenerate, then there exists $C_1,C_2>0$ such that:
  \begin{align}
    \label{eq:th:water-expo}
    \abs{\Vpi - \rel}\le C_1e^{-C_2N}.
  \end{align}
\end{thm}

Lemma~\ref{lem:water-filling} shows that the non-degenerate condition is necessary and sufficient for the existence of a LP-compatible locally linear  policy. Theorem~\ref{th:water-filling} is less precise in the sense that we only show that non-degeneracy is sufficient to obtain an exponentially asymptotically optimal policy. In Section~\ref{subsec:necessary-condition-for-exponential}, we provide an example of a RB that is degenerate and for which there are no exponentially fast asymptotically optimal policy, with respect to the LP relaxed bound. Although we do not prove it, we conjecture that this holds in general so that the non-degeneracy is also a necessary condition for \eqref{eq:th:water-expo} to hold.

\begin{rem}[Comparison with \citet{zhang2021restless}]  \label{rem:comparison-with-other-paper} \
  Note that the authors of \citet{zhang2021restless} introduce a class of fluid-priority policies (in their Algorithm~1) that is close to our definition of water-filling policy. However, there are  major differences between their algorithm and ours:
  
  \begin{itemize}
    \item When $Ny^*_{s,1}(t)$ is not an integer, the authors choose to round fractional number of arms into integer numbers in the water-filling procedure, e.g. no more than $ \lfloor N y^*_{s,1} \rfloor$ arms can be activated in state $s \in \szero$, whereas we consider the water-filling procedure as a map from any vector $\bm \in \Delta^d$ into the decision vector $\by \in \Delta^{2d}$, and apply a non-trivial randomized rounding technique afterwards to avoid these rounding errors, so that our randomized water-filling policy activates exactly $N y^*_{s,1}$ arms in expectation, while respecting the budget constraints. Note that in order to achieve an exponential convergence rate, it is essential to remove all possible rounding errors. This also explains why in our problem formulation in \eqref{eq2-1}, we need to apply a randomized procedure to activate exactly $\alpha N$ arms in expectation when $\alpha N$ is not an integer. Instead in \citet{zhang2021restless}, the authors choose to activate $\lfloor \alpha N \rfloor$ arms for non-integer values, this is another source of rounding error.

    \item When one needs to activate more than $Ny^*_{s,1}(t)$ arms in a state $s\in\szero(t)$, we do a second pass of water-filling algorithm by using a {reversed} order on $\szero(t)$ compared to the order used in first pass, whereas in Algorithm~1 of \citet{zhang2021restless} the orders of the two passes are not specified, and simply given by two "for" loops as in line $14$ and line $19$ of their Algorithm 1. This reversed order is essential to establish the local linearity of $\pi$ around $\bm^*$, which then allows us to use Theorem~\ref{thm:exponential-rate}.  
      \item Finally, in \citet{zhang2021restless} the authors only obtain the $\mathcal{O}(1/N)$ convergence rate for their algorithm on non-degenerate problems, by using a very different proof method.
    \end{itemize}

\end{rem}

\subsection{Proof of results in Section \ref{sec:existenc-and-construction-of-policies}}
\label{ssec:proofs_policies}

\subsubsection{Proof of Proposition~\ref{th:rankable_or_degenerated}}
\label{subsubsec:rankable-vs-non-degenerate}

For the first item, we can actually assert a slightly broader result. It posits that for any RB, the corresponding optimization problem \eqref{eq:relaxed_problem} has an optimal solution, $\byopt$, that adheres to
\begin{equation}\label{eq:total-randomization}
  \sum_{t=0}^{T-1} \abs{\szero(t)} \le T.
\end{equation}
To arrive at this conclusion, we can draw parallels with our formulation of the optimization problem as an MDP, as will be elaborated upon in Equation~\eqref{eq:prob3}. By reconfiguring the optimization problem \eqref{eq:relaxed_problem}, we can depict it as a $T$ time-steps MDP with $T$ constraints stemming from \eqref{eq:active}. A feasible (not necessarily optimal) solution $\by$ engenders a decision rule $w_{\by}$ for the MDP. This is characterized by selecting action $a$ in state $s$ at time-step $t$ with probability $w_{\by}(s,a,t) := \frac{y_{s,a}(t)}{\sum_{a'} y_{s,a'}(t)}$. If $\sum_{a'} y_{s,a'}(t) = 0$, the probability defaults to $w_{\by}(s,0,t) = 0$ and $w_{\by}(s,1,t) = 1$. The number of randomizations of the decision rule $w_{\by}$ is defined as the number of pairs $(s,t)$ such that $0<w_{\by}(s,a,t)<1$ for some action $a$. This means that the number of randomizations is $\sum_t|\szero(t)|$. As an implication of Theorem 3.8 in \citet{altman-constrainedMDP}, there exists an optimal solution $\by^*$ to \eqref{eq:relaxed_problem} such that number of randomizations of $w_{\by^*}$s is bounded by $T$.  The result in \eqref{eq:total-randomization} is naturally inferred.

The claim of the first item is then a direct consequence of Equation~\eqref{eq:total-randomization}: if there exists a unique solution, it must satisfy \eqref{eq:total-randomization}, which by the pigeonhole principle implies that either $\abs{\szero(t)}\le1$ for all $t$ (the problem is rankable) or there exists $t$ such that $\abs{\szero(t)}=0$ (the problem is degenerate).

For the second item, it is a consequence of the following fact: denote by $\by^{*}_{(1)}, \by^{*}_{(2)} \in \Theta$ two optimal solutions of the LP, then for any $0 < \lambda < 1$, $\by^{*}_{\lambda}:= \lambda \by^{*}_{(1)} + (1-\lambda) \by^{*}_{(2)}$ is also an optimal solution. For any $0 \le t \le T-1$, denote by $\szero_{(1)}(t), \szero_{(2)}(t)$ and $\szero_{\lambda}(t)$ the sets with respect to the optimal solutions $\by^{*}_{(1)}, \by^{*}_{(2)}$ and $\by^{*}_{\lambda}$ respectively, then $\szero_{(1)}(t) \cup \szero_{(2)}(t) \subset \szero_{\lambda}(t)$. The proof of this claim is straightforward from the definitions.    \Halmos

The first claim of Proposition \ref {th:rankable_or_degenerated} implies that under the assumption of a unique solution, a problem that is non-rankable cannot be non-degenerate.  This leaves the question as whether there is a problem that is both rankable and degenerate? The answer is yes and we provide a small example below.

\begin{example}[A rankable and degenerate problem]
  Let us consider a two states RB with a proportion of activation $\alpha=0.5$. The initial condition is $\mm(0)=[0.5,0.5]$, the rewards are $\bR^0=[0,0]$ and $\bR^1=[1,0]$, and the matrices are identity matrices: $\bP^0=\bP^1=\mathbf{I}$. The solution to the LP is clearly unique and consists of activating all arms in state $1$ and no arms in state $2$. Hence, $\abs{\szero(t)}=0$ for all $t$. This example is rankable and is also degenerate. \Halmos
\end{example}

As for computational issues concerning the second claim of Proposition \ref{th:rankable_or_degenerated}, it is useful to point out that there exist efficient methods to compute  an optimal solution in the relative interior of $\Theta$. While the simplex method always returns an extremal solution and will not work here,  there exist efficient algorithms in the literature to achieve this goal. In particular, in \citet{mehrotra1993finding} the authors propose a method  to find an interior point in the optimal face of a LP, whose complexity in the worst case is the same as the complexity of solving the LP. Combining this result with Proposition ~\ref{th:rankable_or_degenerated}, we conclude that testing non-degeneracy for RB admitting multiple optimal LP solutions is not harder than the case of having a unique optimal solution.

\subsubsection{Necessary condition for exponential convergence rate} \label{subsec:necessary-condition-for-exponential}

Consider a two states RB with horizon $T=2$ and proportion of activation $\alpha=0.5$. The initial condition is $\mm(0) = [0.5,0.5]$. The rewards are $\rr^0 = [0,0]$, $\rr^1 = [1,0]$. The transition matrices are
\[\pp^1 =
\begin{pmatrix}
  p_1 & \ 1-p_1 \\
  p_2 & \ 1-p_2
\end{pmatrix}
, \pp^0 =
\begin{pmatrix}
  q_1 & \ 1-q_1 \\
  q_2 & \ 1-q_2
\end{pmatrix},
\]
with $0 \le p_1,p_2,q_1,q_2 \le 1$. Let us first establish a sufficient condition on the four parameters $p_1$, $p_2$, $q_1$ and $q_2$ so that the RB is degenerate. For this simple model, solving the linear program \eqref{eq:relaxed_problem} amounts to finding the optimal value $0 \le \beta \le 0.5 = \alpha$ as the proportion of activation of arms in state $1$ at decision epoch $t=0$. At decision epoch $t=1$, there will then be $\beta p_1 + (0.5-\beta)q_1 + (0.5-\beta)p_2 + \beta q_2$ arms in state $1$, and the optimal value of \eqref{eq:relaxed_problem} is
\begin{align*}
  & \beta + \min \left\{ 0.5, \ \beta p_1 + (0.5-\beta)q_1 + (0.5-\beta)p_2 + \beta q_2 \right\}  \\
  = \ & \beta + \min \left\{ 0.5, \ \beta (p_1+q_2) + (0.5-\beta) (q_1+p_2) \right\}
\end{align*}
By definition, the RB is degenerate  if
\begin{equation} \label{eq:non-regular-condition}
  \argmax_{0 \le \beta \le 0.5} \left\{  \beta + \min \left\{ 0.5, \ \beta (p_1+q_2) + (0.5-\beta) (q_1+p_2) \right\} \right\} \ne 0, \ 0.5,
\end{equation}
since then $\szero(0) = \{1,2\}$. A sufficient condition for \eqref{eq:non-regular-condition} to hold is
\begin{equation} \label{eq:non-regular-condition-1}
  q_1+p_2 > 1+p_1+q_2,
\end{equation}
under which the $\argmax$ of \eqref{eq:non-regular-condition} is $\beta^* = 0.5 \times \frac{q_1+p_2 - 1}{(q_1+p_2) - (p_1+q_2)}$ and $\mm^*(1) = [0.5,0.5]$, so we activate exactly all the proportion $0.5 = \alpha$ of arms in state 1 at decision epoch $t=1$. Note that we get  $\abs{\szero(0)}=2$ and $\abs{\szero(1)} = 0$.

We next consider a stochastic model with a population of $N$ arms, where the $2$-dimensional RB satisfies \eqref{eq:non-regular-condition-1} so that it is degenerate. For any LP-compatible policy, our only choice is to activate $\beta^* N$ arms in state $1$, $(0.5-\beta^*) N$ arms in state $2$ at decision epoch $t=0$ (apply randomized rounding if necessary); and by the specific choice of values for rewards $\rr^0$, $\rr^1$, we need to activate as many arms as possible in state $1$ at decision epoch $t=1$. The expected average reward under this policy is then $\beta^* + \expect{\min \left\{ 0.5, G_N \right\}}$, where the random variables $G_N$ (indexed by $N$) inside the bracket are
\begin{equation*}
  G_N := \frac{bin(\beta^* N, p_1) + bin((0.5-\beta^*)N, q_1) + bin((0.5-\beta^*)N, p_2) + bin(\beta^* N, q_2)}{N}.
\end{equation*}
We have $\expect{G_N} = 0.5$ by definition of the value $\beta^*$. Moreover, by elementary probability theory, one has
\begin{equation*}
  \sqrt{N} \cdot \expect{0.5 - \min \left\{ 0.5, G_N \right\}} \xrightarrow[N \rightarrow \infty]{} C > 0.
\end{equation*}
Since the optimal value of \eqref{eq:relaxed_problem} is $\beta^* + 0.5$, this implies that the square root of $N$ convergence with respect to this relaxed upper-bound value can not be improved on this degenerate RB, and it is not due to the problem at decision epoch $t=0$ with $\abs{\szero(0)}>1$, but due to the fact that at $t=1$ one has $\abs{\szero(1)}=0$, and the optimal trajectory is on the boundary of two zones, namely $\left\{ \mm \in \Delta^d \mid \sum_{s \in \splus(1)} \le \alpha \right\}$ and $\left\{ \mm \in \Delta^d \mid \sum_{s \in \splus(1) } \ge \alpha \right\}$. Note that this example implies in particular that the $\calON$ convergence rate in Theorem \ref{thm:CLT-rate} is tight.

Generally speaking, for a degenerate RB, there exists some $t$ for which $\abs{\szero(t)} = 0$. This implies that $\sum_{s\in \splus(t)} m^*_s(t) = \alpha$, which means at time $t$ the optimal trajectory is on the boundary of two zones $\left\{ \mm \in \Delta^d \mid \sum_{s \in \splus(t)} \le \alpha \right\}$ and $\left\{ \mm \in \Delta^d \mid \sum_{s \in \splus(t) } \ge \alpha \right\}$. It is exactly this phenomenon that may prevent an exponentially fast convergence rate. We actually conjecture that for ``essentially all'' degenerate RBs, if $\pi$ is a LP-compatible policy, then there exists constants $\overline{C}, \underline{C} > 0$ independent of $N$ such that
\begin{equation*}
 \frac{\underline{C}}{\sqrt{N}} \le \abs{\Vpi - \rel} \le \frac{\overline{C}}{\sqrt{N}}.
\end{equation*}
This implies that as long as the RB is degenerate, we can not expect a faster than square root convergence to the LP relaxation bound, for any admissible policy.

We believe that such a result holds for ``essentially all'' models, but there are trivial models for which it will not hold. For instance, if the rewards do not depend on state nor action, then  $\Vpi=\rel$ and such a model can be degenerate.

\subsubsection{Proof of Theorem~\ref{th:rankable}} \label{subsubsec:proof4}

Assume first that the RB is rankable and let $\byopt$ be an optimal solution of the LP-problem. For each time $t$, we consider a permutation $\sigma(t)$ that orders the state by starting from the states in $\splus(t)$, then the only state in $\szero$, then the states in $\sminus(t)$ and finally the states in $\sempty$. Let $\piprio$ be the time-dependent priority policy that activates at time $t$ the states following the order $\sigma(t)$.  By \eqref{eq:piecewise_affine}, this policy is piecewise-affine (with finitely many pieces) and continuous. It is therefore Lipschitz continuous.

We now show that $\piprio$ is such that  $\piprio(\bm^*(t))=\by^*(t)$. By definition of $\splus(t)$, for all $s\in\splus(t)$, $y^*_{s,1}(t)=m^*_s(t)$. Let $s_0$ be the only state in $\szero(t)$. As $\sum_{s} y^*_{s,1}(t)=\alpha$, this implies that $\sum_{s\in\splus(t)}y^*_{s,1}(t)<\alpha$ and therefore that $y^*_{s_0,1} = \alpha - \sum_{s\in\splus(t)}y^*_{s,1}(t)$. This shows that $y^*_{s,1}(t)$ satisfies the definition of the time-varying policy \eqref{eq:piecewise_affine}. Note that if $\bm$ is such that $0\le \alpha-\sum_{s\in\splus(t)}m_{s}(t)\le m_{s_0}$, then one has:
\begin{align}
  \label{eq:local-linear}
  \piprio_s(\bm) = \left\{\begin{array}{ll}
    m_s & \text{if $s\in\splus(t)$ }\\
    \alpha - \sum_{s\in\splus(t)}m_{s}(t) & \text{if $s\in\szero(t)$ }\\
    0 & \text{otherwise}
  \end{array}\right.
\end{align}
As a byproduct (which is not used in this proof but will be used later), this also implies that $\piprio$ is locally linear if $\abs{\szero(t)}=1$ for all $t$.

Assume now that the RB is non-rankable and let $\pi$ be a time-dependent priority policy. By construction, at any time $t$, $\pi$ activates the states following a permutation $\sigma(t)$. Hence, if there exists at most one   state $s=\sigma_i(t)$ such that $\pi_{s,0}(\bm^*(t))>0$,  $\pi_{s,1}(\bm^*(t))>0$, and  for all $j<i$,  $\pi_{\sigma_j,0}(\bm^*(t))=0$, and for all $j>i$, $\pi_{\sigma_j,1}(\bm^*(t))=0$. This shows that for all $t$,  $\abs{\{s : \text{$\pi_{s,0}(\bm^*(t))>0$ and $\pi_{s,1}(\bm^*(t))>0$}\}}\le1$. Hence, $\pi$ cannot be LP-compatible because all solutions of \eqref{eq:relaxed_problem} are such that there exists a time $t$ such that $\abs{\szero(t)}\ge2$, which is implied by the assumption that the RB is non-rankable.

\subsubsection{Proof of Lemma~\ref{lem:water-filling}}  \label{subsubsec:proof5}


Fix $(\bMN, \by^*)$ as the input for the "water-filling" in dimension $d$, and let  $\bY \in \alpha \cdot \Delta^d$ be the corresponding output. Suppose that the states are sorted so that the first $s_+$ states are $\splus := \{ s^+_1, \cdots, s^+_{s_+} \}$, the next $s_0$ states are $\szero := \{ s^0_1, \cdots, s^0_{s_0} \}$, the next $s_-$ states are $\sminus := \{ s^-_1, \cdots, s^-_{s_-} \}$, and the rest $s_{\emptyset}$ states are $\sempty := \{ s^{\emptyset}_1, \cdots, s^{\emptyset}_{s_{\emptyset}} \}$. So in total $s_+ + s_0 + s_- + s_{\emptyset} = d$.

In what follows, we show how the water-filling policy can be viewed as a fixed priority policy over a larger state-space.
To see that, we define an auxiliary set of states $\shat$ with cardinal $\widehat{d} := s_+ + (2 s_0 - 1) + s_- + s_{\emptyset}$ in which we duplicate all states in $\szero(t)$ except one:
\begin{equation}
  \label{eq:Shat}
  \shat := \Big\{ s^+_1, \cdots, s^+_{s_+}, \ \underbrace{\overline{s}^0_{s_0}, \cdots, \overline{s}^0_{2}}_{\overline{\szero}}, \  s^0_{1}, \ \underbrace{\underline{s}^0_{2}, \cdots, \underline{s}^0_{s_0}}_{\underline{\szero}}, \ s^-_1, \cdots, s^-_{s_-}, \ s^{\emptyset}_1, \cdots, s^{\emptyset}_{s_{\emptyset}} \Big\},
\end{equation}
and we define the state $\widehat{\bMN}$ as:
\begin{equation} \label{eq:chain1}
  \MNshat := \begin{cases}
              \MN_s, & \mbox{if $s \in \splus \bigcup \sminus \bigcup \sempty \bigcup \{ s^0_{1} \}$}  \\
              \min(\MN_{s^0_i}, y^*_{s^0_i,1})  , & \mbox{if $s = \overline{s}^0_i \in \overline{\szero}$} \\
              \MN_{s^0_i} - \min( \MN_{s^0_i}, y^*_{s^0_i,1}), & \mbox{if $s = \underline{s}^0_i \in \underline{\szero}$}.
            \end{cases}
\end{equation}
Let $\widehat{\bY}$ be the output of a strict priority policy with the input vector $\widehat{{\bMN}}$ and where the states activated following the order as in \eqref{eq:Shat}. Let $\bY$ be defined as in
\begin{equation} \label{eq:chain3}
  Y_s := \begin{cases}
           \widehat{Y_s}, & \mbox{if $s \in \splus \bigcup \sminus \bigcup \sempty \bigcup \{ s^0_{s_0} \}$}  \\
           \widehat{Y_{\overline{s}^0_i}} + \widehat{Y_{\underline{s}^0_i}}, & \mbox{if $s = s^0_i$ with $1 \le i \le s_0-1$}.
         \end{cases}
\end{equation}
By construction, the vector $\bY$ corresponds to the vector obtained by the water-filling algorithm constructed in Section~\ref{subsec:water-filling}.



Now, consider the map chain
\begin{equation} \label{eq:chain4}
  (\bMN, \by^*) \xrightarrow[]{\eqref{eq:chain1}} (\widehat{\bMN}) \xrightarrow[]{\text{strict priority}} \widehat{\bY} \xrightarrow[]{\eqref{eq:chain3}} \bY.
\end{equation}
It should be clear that \eqref{eq:chain1} and \eqref{eq:chain3} are Lipschitz continuous functions. As a strict priority policy is Lipschitz continuous, this shows that the water-filling policy is Lipschitz continuous.

Moreover, if $\abs{\szero}\ge1$, then \eqref{eq:chain1} is locally linear (and by \eqref{eq:local-linear}, the strict priority policy used is also locally linear). As \eqref{eq:chain4} is locally linear, this implies that when the RB is non-degenerate, 
the water-filling policy constructed from this solution is therefore locally linear.



We now show by contradiction that the non-degenerate condition is necessary to obtain a locally linear policy. Assume that the problem is degenerate and consider a solution $\yopt$ of the LP problem \eqref{eq:relaxed_problem}. As the problem is degenerate, there exists $t$ such that $\szero(t)$ is empty. In the following this $t$ is fixed and omitted from the notation for simplicity.

At time $t$, we have $\sum_{s\in \splus} m^*_s = \alpha$. Let us consider an arbitrary  function from $\Delta^d$ to $\Delta^{2d}$ that is locally linear in a small neighborhood of $\bm^*$, and we shall show that the policy induced  by this function cannot be admissible. Indeed, this linear function is defined by  a matrix $\bA \in \R^{d \times d}$ so that $\byone = \bm \cdot \bA$ for any $\bm$ in this neighborhood of $\bm^*$, and in particular $\byone^* = \bm^* \cdot \bA$. Denote by $\beps \in \R^d$ a small perturbation vector so that $\bm^* + \beps \in \Delta^d$ remains in the  neighborhood. The assumption of admissibility yields
  \begin{equation} \label{eq:epsilon-vector-inequality}
    \bzero \le (\bm^*+\beps) \cdot \bA = \byone^* + \beps \cdot \bA \le \bm^* + \beps,
  \end{equation}
  where the inequalities are considered componentwise.

Consider now a state $i \in \splus$, one has $y_{i,1}^* = m^*_i$, hence \eqref{eq:epsilon-vector-inequality} implies that $(\beps \cdot \bA)_i \le \varepsilon_i$. We next replace $\beps$ by $-\beps$, note that this is possible since we are considering a neighbourhood of $\bm^*$, and we obtain the inequality in the other direction: $(\beps \cdot \bA)_i \ge \varepsilon_i$. Consequently, $(\beps \cdot \bA)_i = \varepsilon_i$ for $i \in \splus$. Similarly, for a state $i \in \sminus$, using the same idea we obtain $(\beps \cdot \bA)_i = 0$. This implies that $A_{ij}=\delta_{ij}$ for $i,j \in \splus$, and $A_{ij}=0$ for $i,j \in \sminus$. In particular, this matrix $\bA$ tells us to activate all arms in $\splus$ for any $\bm$ in a small neighbourhood of $\bm^*$. However, since $\sum_{s\in \splus} m^*_s = \alpha$, in any neighbourhood of $\bm^*$, there always exists $\bm$ such that $\sum_{s\in \splus} m_s > \alpha$. This leaves us a contradiction, since we are forced to activate strictly more than $\alpha$ arms for this $\bm$. Hence the non-degeneracy is necessary for the existence of a locally linear policy.\Halmos

\section{\bfseries\scshape{Improvements for finite values of $N$}} \label{sec:LP-update-policy}

In the previous section, we constructed a family of policies that are all asymptotically optimal as $N$ converges to infinity.  In this section, we discuss two directions that can be used to improve the performance for small values of $N$. The first one is to use the Lagrangian-optimal index of \citet{Brown2020IndexPA} -- that we call simply the LP indices. The second one is a new policy that we call the LP update policy. We will compare their performances in the numerical Section \ref{sec:numerical-experiment}.

\subsection{The LP indices}  \label{subsec:LP-indices}

The water-filling policy constructed in the previous section is asymptotically optimal regardless of the order used within the sets $\splus(t)$ and $\sminus(t)$, and it is possible to use a default priority order. This approach is for instance used \citet{zhang2021restless}, as well as in Definition~4.4 of \citet{Ve2016.6} for the infinite horizon problem. Note that as mentioned in Section 8.1 of \citet{Ve2016.6}, how to set priority ordering within $\splus$ and $\sminus$ is left open in that paper. In this section, we define the notion of LP indices, that can serve as a tie-breaking rule among $\splus$ and $\sminus$. Our later numerical experiments suggest that tie solving in $\splus$ and $\sminus$ has a clear influence on the performance of the policy and that the LP-indices perform very well.

Consider the linear program \eqref{eq:relaxed_problem}. By strong duality, there exist Lagrange multipliers  $\gamma_0^*, \dots, \gamma_{T-1}^*$ corresponding to the constraints \eqref{eq:active}, such that $\byopt$ is also an optimal solution of the following problem:
\begin{maxi!}|s|{\by \ge \mathbf{0}}{\sum_{t=0}^{T-1} \sum_{s,a} (R_s^a - a\gamma_t^*) y_{s,a} (t) }{\label{eq:prob3}}{}
  \addConstraint{y_{s,0}(t+1) + y_{s,1}(t+1)= \sum_{s',a} y_{s',a} (t)P^{a}_{s's}\qquad}{}{\forall s,t}
  \addConstraint{y_{s,0}(0) + y_{s,1}(0)=m_s(0)}{}{\forall s.}
\end{maxi!}
The above linear program \eqref{eq:prob3} can be cast into an MDP $X$ with horizon $T$, state space $\mathcal{S}$ and action space $\{0, 1\}$. The reward in state $s \in \mathcal{S}$ under action $a \in \{0, 1\}$ is $\widetilde{R_s^a} := R_s^a - a\gamma_t^*$. The transition probabilities are
$ \mathbb{P} \big( X (t+1) = y \bigm| X (t) = x, \mathrm{action} = a \big) = P^a_{x y}$. The initial condition is $X(0) \sim \mm(0)$, by interpreting $\mm(0)$
as a probability vector. The theory of stochastic dynamic programming \citet{Puterman:1994:MDP:528623} shows that there exists an optimal policy which is Markovian.

Let $Q_{s,a}(t)$ be the $Q$-values of this policy.
We define the LP-indices as
\begin{equation}\label{LP-indices}
  I_s(t) := Q_{s,1}(t) - Q_{s,0}(t).
\end{equation}
The \emph{LP-index policy} is then defined as the water-filling policy, by using the values $I_s(t)$ in \eqref{LP-indices} as a priority score to rank states within $\splus(t)$, $\sminus(t)$ and $\szero(t)$ for the water-filling procedure, at each decision epoch $t$. Note that these indices coincide with the "optimal Lagrangian index" in \citet{Brown2020IndexPA}. The LP-indices will also be defined in the infinite horizon case later in Section \ref{sec:infinite-horizon-case}.

The next result justifies the notion of LP-indices.  In particular, it implies that when the problem is rankable, the LP-indices can be used to construct directly an asymptotically optimal time-dependent priority policy by ordering the states via decreasing LP indices. Note that when the problem is non-rankable, it is really important to use the correct tie-breaking rule among the states such that $I_s(t)=0$ (for instance by using the two passes in the water-filling algorithm). Using another tie-breaking rule is in general sub-optimal, see e.g. \citet{Brown2020IndexPA}.
\begin{lem} \ \label{lem:sign-of-indices}
  The LP-indices are such that $I_s(t)\ge0$ for all $s\in\splus(t)$, $I_s(t)\le0$ for all $s\in\sminus(t)$ and $I_s(t)=0$ for all $s\in\szero(t)$.
\end{lem}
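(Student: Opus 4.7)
The plan is to exploit the fact that \eqref{eq:prob3} is an LP formulation of a standard (uncoupled) finite-horizon MDP with reward $\widetilde{R}^a_s = R^a_s - a\gamma^*_t$, and to use the complementary-slackness correspondence between primal-feasible flow variables $y_{s,a}(t)$ and the Bellman optimality equations. By construction of $\gamma^*_t$, any optimal solution $\byopt$ of \eqref{eq:relaxed_problem} remains optimal for \eqref{eq:prob3}, so the partition $(\splus(t),\szero(t),\sminus(t),\sempty(t))$ is read off the support of a solution to this decoupled LP.

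First I would introduce the value function $V_s(t)$ of the MDP $X$ under its Markovian optimal policy. By standard dynamic programming, $V_s(t) = \max_{a\in\{0,1\}} Q_{s,a}(t)$ where $Q_{s,a}(t) = \widetilde{R}^a_s + \sum_{s'} P^a_{ss'} V_{s'}(t+1)$. The key fact I would invoke is that an action $a$ belongs to the support of an optimal (possibly randomized) Markovian policy at $(s,t)$ if and only if it achieves the maximum in the Bellman equation, i.e.\ $Q_{s,a}(t)=V_s(t)$. This is the standard complementary-slackness statement for the LP formulation of a finite MDP (see e.g.\ Puterman).

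Next I would translate this into the three cases. If $s \in \splus(t)$, then $y^*_{s,1}(t)>0$ while $y^*_{s,0}(t)=0$, so action $1$ is in the optimal support at $(s,t)$. Hence $Q_{s,1}(t) = V_s(t) \ge Q_{s,0}(t)$, giving $I_s(t) \ge 0$. Symmetrically, if $s \in \sminus(t)$, action $0$ must be in the optimal support, giving $Q_{s,0}(t) \ge Q_{s,1}(t)$ and hence $I_s(t) \le 0$. If $s \in \szero(t)$, both $y^*_{s,0}(t)>0$ and $y^*_{s,1}(t)>0$, so both actions lie in the optimal support, forcing $Q_{s,0}(t)=Q_{s,1}(t)=V_s(t)$, i.e.\ $I_s(t)=0$.

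The only delicate point — and what I expect to be the main obstacle to phrase cleanly rather than technically hard — is the justification that $y^*_{s,a}(t)/m^*_s(t)$ can indeed be interpreted as the probability of choosing action $a$ in state $s$ at time $t$ under a Markovian optimal policy of the MDP formulation \eqref{eq:prob3}, so that the complementary-slackness conclusion applies. This requires checking that $m^*_s(t)>0$ whenever $s\in\splus(t)\cup\szero(t)\cup\sminus(t)$ (which is true by definition of the partition, since $m^*_s(t)=y^*_{s,0}(t)+y^*_{s,1}(t)$), and then pointing to the standard LP-based decomposition of a finite-horizon MDP to conclude.
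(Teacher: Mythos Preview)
Your proposal is correct and follows essentially the same approach as the paper: both arguments interpret $\byopt$ as the occupation measure of an optimal Markovian policy $\psi^*$ for the decoupled MDP \eqref{eq:prob3} (via $y^*_{s,a}(t)=\mathbb{P}^{\psi^*}\big(X(t)=s\big)\cdot\psi^*_{s,a}(t)$) and then use that any action in the support of an optimal policy must attain the Bellman maximum. The ``delicate point'' you flag is exactly the step the paper dispatches with that factorization, which is well-defined precisely because $m^*_s(t)>0$ on $\splus(t)\cup\szero(t)\cup\sminus(t)$.
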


\proof{Proof. }
Let $\psi^*$ be an optimal Markovian stationary policy of \eqref{eq:prob3} formulated as a Markov decision process $X$, so that $\psi^{*}_{s,a} (t)$ is the probability of choosing action $a$ if $X(t)=s$. Our previous discussion shows that
    \begin{equation*}
      y_{s,a}^*(t) = \mathbb{P}^{\psi^*} \big( X(t)=s \big) \cdot \psi^{*}_{s,a} (t).
    \end{equation*}
Hence
\begin{itemize}
  \item $s \in \splus(t) \Rightarrow y^*_{s,0}(t)=0 \Rightarrow \psi^{*}_{s,1} (t) = 1 \mbox{ and } \psi^{*}_{s,0} (t) = 0 \Rightarrow I_s(t) > 0$;
  \item $s \in \sminus(t) \Rightarrow y^*_{s,1}(t)=0 \Rightarrow \psi^{*}_{s,1} (t) = 0 \mbox{ and } \psi^{*}_{s,0} (t) = 1 \Rightarrow I_s(t) < 0$;
  \item $s \in \szero(t) \Rightarrow 0 < y^*_{s,0}(t) < 1 \mbox{ and } 0 < y^*_{s,1}(t) < 1 \Rightarrow 0 <\psi^{*}_{s,1} (t)<1 \mbox{ and } 0<\psi^{*}_{s,0} (t)<1 \Rightarrow I_s(t) = 0$.
\end{itemize}   \Halmos

\endproof

\subsection{The LP-update policy}

One potential drawback of the Lipschitz continuous policies with their $\calON$ convergence rate proven in Theorem \ref{thm:CLT-rate} is that, the constant $C>0$ in inequality \eqref{eq:Gronwall-type-estimation} grows exponentially with the horizon $T$. Hence, for large $T$ we may need $N$ to be extremely large in order to keep $C/\sqrt{N}$ small. Intuitively, a LP-compatible policy is such that $\pi_t(\cdot)$ satisfies $\pi_t(\bm^*(t)) = \by^*(t)$. Hence, if the stochastic vector $\bMN(t)$ is close to $\bm^*(t)$, the decision vector $\bY(t) = \pi_t(\bMN(t))$ recommended by $\pi_t(\cdot)$ should be close to optimal. Yet, if $\bMN(t)$ is far from $\bm^*(t)$ (this could happen, albeit with a small probability), the decision vector recommended by $\pi_t(\cdot)$ could be far from optimal. To overcome this problem, in this section we introduce a new policy called the \emph{LP-update policy}, that recomputes a new LP-compatible policy periodically, which originates from the certainty equivalent control in dynamic programming. It works as follows:

At decision epoch $t$, we solve a relaxed LP \eqref{eq:relaxed_problem} with parameters $\{\bMN(t), T-t\}$, where the initial state is $\bMN(t)$ (as we observe at time $t$), and the time horizon is $T-t$. We choose the decision vector at time $t$ as given by this LP solution. The \emph{LP-update} policy is to apply this procedure at every decision epoch $0 \le t \le T-1$, which is summarized in Algorithm \ref{algo:LP-update}.

\begin{algorithm}[hbtp]
  \SetAlgoLined
  \SetKwInput{KwInput}{Input}
  \KwInput{Initial configuration vector $\bMN(0) = \bm(0)$ over time span  $[0,T]$.}
  \For{$t = 0,1,2,\dots,T-1$}{
    Solve LP \eqref{eq:relaxed_problem} with initial configuration vector
      $\bMN(t)$ over time span $[t,T]$. Output is $\by^*$ over time span $[t,T]$ \;
    Set $Y_{s,a}(t) := y^*_{s,a}(t)$ for $a \in \{0,1\}$ and $s \in \calS$ \;
    Obtain $\bYN(t)$ from $\bY(t)$ as in \eqref{eq:map-chain}, apply randomized rounding described in Section \ref{subsec:admissible-policy-and-randomized-rounding} if necessary \;
    Use actions $\YN_{s,a}(t)$ over all arms to advance to the next time-step\;
  }
\caption{The LP-update policy.}
  \label{algo:LP-update}
\end{algorithm}

Note that solving the LP problem~\eqref{eq:relaxed_problem} at each time steps can be quite costly. Hence, as a compromise one might do update only from time to time, and apply the water-filling policy obtained from the most recent solution of LP between two updates. For the sake of simplicity, we discuss in the following the LP-update policy that updates at every decision epoch. The following result demonstrates that the LP-update policy is asymptotically optimal with rate $\calON$, as any LP-compatible Lipschitz continuous policy does. In particular, it implies that the LP-update policy is LP-compatible, as can also be seen from Bellman's principle of optimality.


\begin{thm} \label{thm:LP-update-rate} \
  Consider a finite horizon RB. Let the LP-update policy be defined as above, and denote by $\Update$ the value of LP-update policy on a RB with parameter set $\left\{ \bm(0), T \right\}$. Then there exists a constant $C'>0$ independent of $N$ such that
  \begin{equation*}
    \abs{\Update - \rel} \le \frac{C'}{\sqrt{N}}.
  \end{equation*}
  Consequently the LP-update policy is asymptotically optimal with rate $\calON$.
\end{thm}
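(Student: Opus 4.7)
The plan is a dynamic programming plus telescoping argument that exploits the fact that, at every decision epoch, the LP-update policy takes an action optimal for a freshly-parametrized LP. The key observation is the Bellman-type identity: for any $\bm \in \Delta^d$ and $1 \le k \le T$,
\begin{equation*}
V_{\mathrm{rel}}(\bm, k) = \max_{\by \ge \mathbf{0}} \Big\{\textstyle\sum_{s,a} R_s^a y_{s,a} + V_{\mathrm{rel}}(\phi(\by), k-1) : y_{s,0}+y_{s,1}=m_s \ \forall s,\ \sum_s y_{s,1}=\alpha\Big\},
\end{equation*}
obtained by splitting the LP \eqref{eq:relaxed_problem} into the first step and the remaining $k-1$ steps. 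At each time $t$, the LP-update policy chooses the first-step action $\by^{\mathrm{LP}}(t)$ attaining this maximum with $\bm = \bMN(t)$ and $k = T-t$.

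First I would verify that $\bm \mapsto V_{\mathrm{rel}}(\bm, k)$ is Lipschitz continuous on $\Delta^d$ with a constant $L$ that can be chosen uniformly over $k \le T$. Since $V_{\mathrm{rel}}(\cdot, k)$ is the value of a linear program whose right-hand side depends linearly on $\bm$ only through constraint \eqref{eq:init2}, it is piecewise linear and concave on the simplex, and hence Lipschitz; a uniform $L$ can also be obtained inductively via the Bellman identity using the linearity of $\phi$. With $L$ in hand, I would fix $t$, condition on $\bMN(t)$, and use the Bellman decomposition to write
\begin{equation*}
V_{\mathrm{rel}}(\bMN(t), T-t) = \textstyle\sum_{s,a} R_s^a y^{\mathrm{LP}}_{s,a}(t) + V_{\mathrm{rel}}(\phi(\by^{\mathrm{LP}}(t)), T-t-1).
\end{equation*}
Randomized rounding gives $\expect{\YN_{s,a}(t) \mid \bMN(t)} = y^{\mathrm{LP}}_{s,a}(t)$ and $\|\bYN(t)-\by^{\mathrm{LP}}(t)\|_1 \le d/N$ almost surely; combined with the Lipschitz constant $\ell$ of $\phi$ and Lemma~\ref{lem:Markovian-transition-analysis}, I would deduce $\expect{\|\phi(\by^{\mathrm{LP}}(t)) - \bMN(t+1)\|_1 \mid \bMN(t)} \le \ell d/N + \sqrt{d/N}$. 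Applying the Lipschitz bound on $V_{\mathrm{rel}}(\cdot, T-t-1)$ then yields
\begin{equation*}
V_{\mathrm{rel}}(\bMN(t), T-t) \le \expect{\textstyle\sum_{s,a} R_s^a \YN_{s,a}(t) \,\big|\, \bMN(t)} + \expect{V_{\mathrm{rel}}(\bMN(t+1), T-t-1) \,\big|\, \bMN(t)} + \frac{c}{\sqrt{N}},
\end{equation*}
for a constant $c$ depending on $L$, $\ell$, $d$, and $\max_{s,a}|R_s^a|$ but independent of $N$ and $t$.

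Taking total expectation, telescoping from $t=0$ to $T-1$, and using $V_{\mathrm{rel}}(\cdot, 0) \equiv 0$ gives $V_{\mathrm{rel}}(\bm(0), T) \le \Update + Tc/\sqrt{N}$. The reverse inequality $\Update \le \rel$ is immediate since the LP is a relaxation of the $N$-arm control problem, so setting $C' := Tc$ completes the proof. The main technical obstacle is the uniform Lipschitz bound on $V_{\mathrm{rel}}(\cdot, k)$: classical LP parametric sensitivity yields Lipschitzness on the simplex, but one has to verify that the constant can be chosen independent of $k$ (or at worst growing polynomially in $k$, which is harmless since $T$ is fixed), so that the telescoping of the one-step errors still gives an overall bound of order $1/\sqrt{N}$.
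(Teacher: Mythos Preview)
Your proposal is correct and follows essentially the same approach as the paper: both use the Bellman-type decomposition of $V_{\mathrm{rel}}(\cdot,k)$, invoke Lipschitz continuity of $\bm\mapsto V_{\mathrm{rel}}(\bm,k)$ (with the constant taken as a maximum over $k\le T$, which is fine since $T$ is fixed), bound the one-step error via the randomized-rounding deviation and Lemma~\ref{lem:Markovian-transition-analysis}, and telescope over $t$ to obtain the $\mathcal{O}(T/\sqrt{N})$ bound. The only cosmetic difference is that you establish the upper bound by telescoping and get the lower bound from the relaxation inequality $\Update\le\rel$, whereas the paper tracks the signed difference $Z(t)=\updates[T-t]{t}-\rels[T-t]{t}$ directly; the arguments are otherwise identical.
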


\proof{Proof.}
Denote by $\byoptt$ the solution of the LP \eqref{eq:relaxed_problem} with parameter set $\left\{\bMN(t),  T-t \right\}$ at decision epoch $t$. Write similarly $\bmoptt$ where $m^{t*}_s(t') = y^{t*}_{s,0}(t')+y^{t*}_{s,1}(t')$ for $t \le t' \le T-1$ and $s \in \mathcal{S}$. Bellman's principle of optimality gives
\begin{equation} \label{eq:proof-update-rel}
  \relm{\bMN(t),T-t} = \sum_{s,a}\yoptt_{s,a}(t)R^a_s + \relm{\bmoptt(t+1),T-(t+1)},
\end{equation}
and the value of the LP-update policy on parameter set $\left\{\bMN(t),  T-t \right\}$ is
\begin{align}
  \updates[T-t]{t} = & \sum_{s,a}\yoptt_{s,a}(t)R^a_s + \expect{\updates[T-(t+1)]{t+1}}. \label{eq:proof-update}
\end{align}

Denote by $Z(t):=\updates[T-t]{t} - \relm{\bMN(t),T-t}$ the difference between \eqref{eq:proof-update-rel} and \eqref{eq:proof-update}, one has $Z(T)=0$ and for all $t\in\{1\dots T-1\}$:
\begin{align*}
  \expect{Z(t)} &= \expect{\updates[T-(t+1)]{t+1} -  \relm{\bmoptt(t+1),T-(t+1)}}\\
  &=\expect{Z(t+1)} + \expect{\relm{\bMN(t+1), T-t+1} - \relm{\bmoptt(t+1),T-(t+1)}}.
\end{align*}

From the general theory of linear programming (see for instance Section 5.6.2 of \citet{10.5555/993483}), the function $V_{\mathrm{rel}} (\ \cdot \ , t): \Delta^d \rightarrow \mathbb{R} $ is Lipschitz continuous with a constant denoted $K_t$. We have:
\begin{align*}
  \abs{\Update - \rel} &= \expect{Z(0)} \le \sum_{t=0}^{T-1} \expect{K_t\norme{\bMN(t+1) - \bmoptt(t+1)}}.
\end{align*}

By Lemma \ref{lem:Markovian-transition-analysis} we have
\begin{align*}
  \bMN(t+1) &= \phi(\bYN(t)) + \bEN(t), \\
  \bmoptt(t+1) &=\phi(\byoptt(t)).
\end{align*}
Moreover, by construction $\norme{\bYN(t) - \byoptt(t)} \le 2d/N$ where the term $2d/N$ is caused by randomized rounding and is of order $\calO(\frac{1}{N})$. Recall also that $\phi(\cdot)$ is a Lipschitz function. The dominating error hence comes from $\expect{\bEN(t) \mid \bYN(t)} \le \sqrt{d} / \sqrt{N}$, by using Lemma \ref{lem:Markovian-transition-analysis}. We therefore can bound:
\begin{equation} \label{eq:Gronwall-avoided}
  \abs{\Update - \rel} \le \frac{\sqrt{d}\sum_{t=1}^T K_t}{\sqrt{N}} + \calO(\frac{1}{N}).
\end{equation}
\Halmos
\endproof

\begin{rem}[The growth rate of the constants] \label{rem:discussion-of-the-constants} \
 Let us comment on  how the positive constants $C$ in \eqref{eq:thm:CLT1} of Theorem \ref{thm:CLT-rate}, $C_1$, $C_2$ in Theorem \ref{thm:exponential-rate} and Theorem \ref{th:water-filling}, $C'$ in Theorem \ref{thm:LP-update-rate} grow with the other parameters of the RB model, in particular, the horizon $T$. As can be seen from the proofs of these theorems, $C$, $C_1$ and $C_2^{-1}$ are all chosen to be in the order of $(\widetilde{L})^T$, where $\widetilde{L}$ is an upper bound of the Lipschitz constants of a class of Lipschitz continuous maps, and is independent of both $T$ and $N$. The exponential growth with respect to $T$ is inevitable, since we rely on a Gronwall's lemma analysis in these results. The dependence of $\widetilde{L}$ on the number of states $d$ is not clear, as it depends subtly on all the other parameters of the RB model, like the entries of the transition matrices $\pp^0$, $\pp^1$ (which are themselves of size $d \times d$), as well as $\rr^0$, $\rr^1$ and $\alpha$.

 The situation is different for the constant $C'$ because the analysis of the LP-update policy does not rely on a Gronwall's type result.   If we neglect the $\calO(1/N)$ term in \eqref{eq:Gronwall-avoided}, $C'$ can be chosen as $\sqrt{d}\sum_{t=1}^T K_t$, where $K_t$ is the Lipschitz constant of $\left\{V_{\mathrm{rel}} (\ \cdot \ , t)\right\}_{t\ge 0}$. In the very recent paper \cite{brown2022fluid}, the authors analyze a model similar to ours and obtain bounds on the Lipschitz-constants of $\left\{V_{\mathrm{rel}} (\ \cdot \ , t)\right\}_{t\ge 0}$ (see Proposition~4.2 therein). Their proof can easily be adapted to our model to show that $K_t\le T-t$. This demonstrates that $C'$ can be selected to be smaller than $T^2\sqrt{d}/2$ and implies that \eqref{eq:Gronwall-avoided} can be refined into
\begin{equation*}
  \abs{\Update - \rel} \le \frac{T^2 \sqrt{d}}{2\sqrt{N}} + \calO(\frac{1}{N}).
\end{equation*}
This exibits a polynomial growth in $T$, as opposed to the exponential growth of Theorem~\ref{thm:CLT-rate}.
 
\end{rem}

It is natural to expect that the LP-update policy performs better than its non-update counterpart. In particular, the LP-update policy should become optimal exponentially fast on non-degenerate RB models. This result, as well as a more sophisticated implementation of the update idea and its analysis, are available in a followup paper \citet{gast2022lp}. We discuss the comparison between the update and non-update approaches in  more details in our numerical experiments.

\section{\bfseries\scshape{Infinite horizon case}} \label{sec:infinite-horizon-case}

In this section we study the discrete-time Markovian \emph{infinite horizon RB} model, which is defined with the parameters $\big\{ (\pp^0, \pp^1, \rr^0, \rr^1); \alpha, N \big\}$. Since the analysis follows the same line as in the finite horizon case, we shall be brief and highlight mainly the differences. In particular, we will discuss the uniform global attractor property in Theorem \ref{thm:LP-index-policy-for-IHRB}, and compare the LP indices with the classical Whittle indices in Proposition \ref{prop:LP-indices-for-IHRB}.


\subsection{Infinite-horizon LP relaxation and non-degenerate RB}  \label{subsec:infinite-lp-relaxation-and-non-degenerate-RB}
The analogue of \eqref{eq:prob1-reformulated} in the infinite horizon case is
\begin{maxi!}|s|{\pi \in \Pi}{\lim_{T \rightarrow \infty} \frac{1}{T} \mathbb{E}_{\pi} \Big[ \sum_{t=0}^{T-1} \YN_{s,a}(t) R^{a}_{s} \Big]}{\label{eq:prob4}}{V^{(N)}_{\mathrm{opt}} (\infty) =}
 \addConstraint{\sum_{s} \YN_{s,1}(t) =
 \begin{cases}
 (\floor{\alpha N}+1)/N, & \mbox{with probability } \{\alpha N\} \\
  \floor{\alpha N}/N, & \mbox{otherwise.}
 \end{cases} \ \ \forall t \label{eq:budget-constraint}}{}
 \addConstraint{\text{Arms follow the Markovian evolution \eqref{eq:markovian_evolution}}}{}
\end{maxi!}

Here $\Pi$ is the set of Markovian stationary policies. To ease the notations, we assume that $V^{(N)}_{\mathrm{opt}} (\infty)$ does not depend on the starting state. This is true for instance if the infinite horizon RB is weakly communicating \cite{Puterman:1994:MDP:528623}. This does not play a bigger role than simplifying the notations, because the UGAP property (introduced later) and our proof show that even if $V^{(N)}_{\mathrm{opt}} (\infty)$ were to depend on the initial configuration vector, it would still converge to $V^{(N)}_{\mathrm{rel}} (\infty)$. 


We next relax the constraints in \eqref{eq:budget-constraint} into the following single constraint
\begin{equation}\label{eq:relax-constraint-infinite}
  \lim_{T \rightarrow \infty} \frac{1}{T} \sum_{t=0}^{T-1} \sum_s \mathbb{E}_{\pi} \big[ \YN_{s,1}(t) \big] = \alpha,
\end{equation}
and define variables $y_{s,a}$ for $s \in \mathcal{S}$, $a \in \{0,1\}$ as
\begin{equation*}
  y_{s,a} := \lim_{T \rightarrow \infty} \frac{1}{T} \sum_{t=0}^{T-1} \expectpi{ \YN_{s,a}(t)}.
\end{equation*}
We then obtain the following linear program as the analogue of \eqref{eq:relaxed_problem}:
\begin{maxi!}|s|{\by \ge \mathbf{0}}{ \sum_{s,a} R_s^a y_{s,a} }{\label{eq:prob-inf-equiv}}{V_{\mathrm{rel}}(\infty)=}
  \addConstraint{\sum_{s} y_{s,1} = \alpha\label{eq:active-inf-equiv}}{}{}
  \addConstraint{y_{s,0} + y_{s,1}= \sum_{s',a} y_{s',a} P^{a}_{s's}\label{eq:markov-inf-equiv}\qquad}{}{\forall s}
  \addConstraint{\sum_{s,a} y_{s,a}=1}{}.{ \label{eq:init2-inf-equiv}}
\end{maxi!}
Denote by $\by^*$ an optimal solution of \eqref{eq:prob-inf-equiv}. Similarly to the finite-horizon case, we define the following four sets, which form a partition of $\mathcal{S}$.
\begin{align*}
  \splus &:= \big\{ s \in \mathcal{S} \mid y^*_{s,1} >0 \ \text{and} \ y^*_{s,0} =0 \big\}\\
  \szero &:= \big\{ s \in \mathcal{S} \mid y^*_{s,1} >0 \ \text{and} \ y^*_{s,0} >0 \big\}\\
  \sminus &:= \big\{ s \in \mathcal{S} \mid y^*_{s,1} =0 \ \text{and} \ y^*_{s,0} >0 \big\}\\
  \sempty &:= \big\{ s \in \mathcal{S} \mid y^*_{s,1} =0 \ \text{and} \ y^*_{s,0} =0 \big\}.
\end{align*}
Compared to the sets before, these sets do not dependent on $t$. Note that the unichain assumption implies that $\sempty$ is empty.

As before, we say that an infinite RB is \emph{non-degenerate} if there exists a solution $\by^*$ of \eqref{eq:prob-inf-equiv} such that $\abs{\szero}\ge1$, and is \emph{rankable} if there exists a solution $\by^*$ with $\abs{\szero}\le 1$. Similar to Equation~\eqref{eq:total-randomization}, we prove that

\begin{prop} \label{prop:number-of-randomization-infinite} \
  For any infinite horizon RB, the optimization problem \eqref{eq:prob-inf-equiv} has an optimal solution $\by^*$ satisfying $\abs{\szero} \le 1$.
\end{prop}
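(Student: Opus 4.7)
The proof plan mirrors very closely the argument given for Proposition~\ref{th:rankable_or_degenerated} and Equation~\eqref{eq:total-randomization} in the finite-horizon case. The key observation is that the LP~\eqref{eq:prob-inf-equiv} can be recast as a single-arm constrained average-reward MDP, where the state space is $\mathcal{S}$, the action space is $\{0,1\}$, the transition kernels are $\pp^0,\pp^1$, the reward to maximise is $\sum_{s,a}R^a_sy_{s,a}$, and the single linear constraint is the budget constraint~\eqref{eq:active-inf-equiv}, namely $\sum_s y_{s,1}=\alpha$. Here the variables $y_{s,a}$ are interpreted as the long-run occupation measure of a stationary (possibly randomised) policy $\psi$, where $\psi_{s,a}$ is the probability of picking action $a$ in state $s$ and $y_{s,a}=\mu^\psi(s)\psi_{s,a}$ for $\mu^\psi$ the stationary distribution induced by $\psi$ (well defined thanks to the unichain assumption, which implies $\sempty=\emptyset$).

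With this interpretation, a state $s\in\szero$ corresponds exactly to a state at which the associated optimal stationary policy \emph{randomises}, i.e.\ $0<\psi^*_{s,1}<1$, whereas for $s\in\splus$ one has $\psi^*_{s,1}=1$ and for $s\in\sminus$ one has $\psi^*_{s,1}=0$. I would then invoke Theorem~3.8 of \citet{altman-constrainedMDP} applied to the constrained MDP above: for a feasible constrained MDP with $K$ inequality constraints, there exists an optimal stationary policy using at most $K$ randomisations (counted as the number of states where the policy is not pure). In our setting $K=1$ since the only linear constraint is~\eqref{eq:active-inf-equiv}, hence the optimal stationary policy can be chosen to randomise in at most one state, which translates into $\abs{\szero}\le 1$ for the corresponding occupation measure $\by^*$.

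The one technicality to be slightly careful about is making sure Altman's theorem applies in the average-reward unichain setting (not only the discounted one), and that the correspondence between LP occupation measures and stationary policies is indeed a bijection preserving objective value and constraint value. Both points are standard: the unichain assumption guarantees that the stationary distribution $\mu^\psi$ depends smoothly on $\psi$, so that the LP~\eqref{eq:prob-inf-equiv} and the constrained MDP have the same optimal value, and the extreme-point characterisation of Altman's theorem carries over verbatim. No new machinery beyond the one already used for~\eqref{eq:total-randomization} is needed; if anything, the argument is slightly simpler because there is a single constraint instead of $T$. Once $\abs{\szero}\le 1$ is obtained for at least one optimal $\by^*$, the proposition is established.
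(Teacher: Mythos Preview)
Your argument is correct and follows the same route as the paper: recast \eqref{eq:prob-inf-equiv} as a constrained average-reward MDP with a single constraint and apply Altman's bound on the number of randomisations. The only adjustment is the citation: in the average-cost setting the paper invokes Theorem~4.4 of \citet{altman-constrainedMDP} (the expected-average-cost version) rather than Theorem~3.8 (the discounted version you mention), which is precisely the technicality you flagged.
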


The proof of this claim is similar to its finite horizon counter-part around Equation~\eqref{eq:total-randomization}, except that this time we apply Theorem 4.4 of \citet{altman-constrainedMDP}, which is the same type of result stated for constrained MDP using the expected average cost criteria. Consequently, any infinite horizon RB is rankable.

\subsection{Asymptotic optimality of LP-priority policy with exponential rate}  \label{subsec:lp-prioriy-policy-with-exponential-rate}

Following Definition 4.4 of \citet{Ve2016.6}, we define the set of LP-priorities as $\Sigma := \bigcup_{\by^*} \Sigma(\by^*)$, where $\Sigma(\by^*)$ is the set of permutations $\sigma=\sigma_1\dots\sigma_d$ of the $d$ states such that any state in $\splus$ appears before any state in $\szero$, and any state in $\szero$ appears before any state in $\sminus$. We call the corresponding policy a \emph{LP-priority policy}.

By Proposition \ref{prop:number-of-randomization-infinite}, there exists $\by^*$ such that $\abs{\szero}\le 1$. We shall choose this $\by^*$ and fix $\sigma^* \in \Sigma(\by^*)$. Denote by $V^{(N)}_{\mathrm{LP}} (\infty)$ the value of the corresponding LP-priority policy. Clearly we have $V^{(N)}_{\mathrm{LP}} (\infty) \le V^{(N)}_{\mathrm{opt}} (\infty) \le V_{\mathrm{rel}} (\infty)$. We wish to show the convergence of $V^{(N)}_{\mathrm{LP}} (\infty)$ to $V_{\mathrm{rel}} (\infty)$ as $N$ goes to infinity, and provide similar rates of convergence. However, in the infinite horizon case, an additional important assumption on the model, which does not appear in the finite horizon case, must be assumed in order for the convergence to hold, for which we discuss next.

As a LP-priority policy is a strict priority policy, one can show that the following map (the analogue of \eqref{eq:map-chain})
\begin{equation} \label{eq:map-chain-inf}
  \Psi : \bMN(t) \xrightarrow[\text{policy}]{\text{LP priority}} \bY(t) = \bYN(t) \xrightarrow[\text{Markovian transition~\eqref{eq:markovian_evolution}}]{\text{each arm follows the}} \phi(\bYN(t))
\end{equation}
is a piecewise-affine and continuous function from $\Delta^d$ to $\Delta^d$, with $d$ affine pieces (see Lemma 3.1 of \citet{gast2023exponential}). Define the $t$-th iteration of maps $\Psi_{t\ge0}(\cdot)$ as $\Psi_0(\bm) = \bm$, $\Psi_{t+1}(\bm) = \Psi \big( \Psi_t(\bm) \big)$. We assume that the dynamics of $\Psi_{t\ge0}(\cdot)$ satisfies the following property:

\textbf{(Uniform Global Attractor Property (UGAP))} The vector $\mm^* \in \Delta^d$ given by the optimal solution of \eqref{eq:prob-inf-equiv} is a uniform global attractor of $\Psi_{t\geq0} (\cdot)$, i.e. for all $\epsilon>0$, there exists $T(\epsilon)>0$ such that for all $t\ge T(\epsilon)$ and all $\mm\in\Delta^d$, one has $\norme{\Psi_t(\mm)-\mm^*}\le\epsilon$.

The configuration vector $\bm^*$ is the stationary regime under the LP-priority policy, obtained from an optimal LP solution in \eqref{eq:prob-inf-equiv}. Intuitively speaking, the difference between the stochastic trajectory of the RB with $N$ arms to the deterministic dynamics $\Psi_{t\geq0} (\cdot)$ induced by the LP-priority policy will diminish as $N$ grows. The UGAP condition ensures that any initial configuration vector will eventually converge to $\bm^*$ under the deterministic dynamics. Consequently, the stochastic $N$-armed bandit in stationary regime will also concentrate on $\bm^*$ as $N$ goes to infinity, which gives asymptotic optimality. Conversely, we have found numerical examples where the deterministic dynamics is cyclic (so that UGAP is not satisfied), and the stochastic trajectories are "attracted" to this cycle. As a result, the corresponding LP-priority policies are asymptotically sub-optimal in these cases.

The next theorem is a refinement of the asymptotic optimality result in \citet{Ve2016.6} (Proposition 4.14), proving the exponential convergence rate under the additional non-degeneracy condition on the infinite horizon RB.

\begin{thm}  \label{thm:LP-index-policy-for-IHRB} \
  Consider an infinite horizon RB which is unichain and satisfies the global attractor property. Then the LP-priority policy induced by $\sigma^*$ is asymptotically optimal. Moreover, if the RB is non-degenerate and satisfies the UGAP, then the convergence rate can be shown to be exponential: there exist two constants $C_3,C_4>0$ independent of $N$ such that
  \begin{equation*}
    \abs{V^{(N)}_{\mathrm{LP}} (\infty) - V_{\mathrm{rel}} (\infty)} \le C_3 e^{-C_4 N}
  \end{equation*}
\end{thm}

The proof of this theorem aligns with Theorem 3.2 from \citet{gast2023exponential}. Let us comment on the conditions assumed for the two theorems. The proof of Theorem 3.2 from \citet{gast2023exponential} predicates that the infinite horizon RB is indexable as an initial requirement. Our current result does not hinge upon the assumption of indexability.

The non-singularity condition outlined in \citet{gast2023exponential} parallels the non-degenerate condition discussed here. As demonstrated by Remark 3.1 in \citet{gast2023exponential}, this condition is generally indispensable to guarantee an exponential rate. Yet, in contrast to the non-degenerate condition in Theorem \ref{th:water-filling} for finite horizons, its counterpart for infinite horizons is almost always met. This is because degeneracy implies $\sum_{s \in \splus} m^*_s = \alpha$, which sets an equality constraint on the model parameters.

The UGAP assumption stands as a more stringent version of the global attractor condition—a condition frequently encountered in literature (see, for instance, \cite{WeberWeiss1990} and other works referencing their findings). UGAP further demands that $\bm^*$ maintains local stability. This, in conjunction with the piecewise-affine character of $\Psi$ and the non-degeneracy of the RB, indicates the exponential stability of $\bm^*$, as detailed in Appendix C.3 of \citet{gast2023exponential}. This stability becomes a critical factor in substantiating the exponential convergence rate. Although our assumption is reinforced to pinpoint the exponential convergence rate, we could not construct a non-degenerate example that meets the global attractor criteria and for which $\bm^*$ is \emph{not} exponentially stable. We suspect that under the current problem setting, the exponentially stable condition might be implied automatically by the global attractor condition, although we do not know how to prove it.

It is worth mentioning that there is no universal method to ascertain the global attractor property. However, it can be assessed numerically. In a practical scenario, observing the dynamical system's behavior under a multitude of initial conditions is sufficient to determine if this criterion is met. As highlighted in \citet{Ve2016.6}, formulating non-priority policies in instances without the global attractor property for infinite horizon RB remains a challenge. Nonetheless, a recent progress is presented in \citet{hong2023restless}. Here, the UGAP requirement is substituted by the so-called "Synchronization Assumption", under which a convergence rate of $\calON$ is established.

\subsection{The infinite-horizon LP indices and the Whittle indices}  \label{subsec:lp-indices-and-whittle-indices}

Similar to the LP indices discussed in Section \ref{subsec:LP-indices} for the finite horizon RB, we can also define those indices in the infinite horizon case as follows: By strong duality, there exists Lagrange multiplier $\gamma^* \in \R$ such that $\by^*$ is also an optimal solution to the following linear program:
\begin{maxi!}|s|{\by \ge \mathbf{0}}{\sum_{s,a} (R_s^a - a\gamma^*) y_{s,a} }{\label{eq:prob5}}{}
  \addConstraint{y_{s,0}+ y_{s,1} = \sum_{s',a} y_{s',a} P^{a}_{s's}\qquad}{}{\forall s}
  \addConstraint{\sum_{s,a} y_{s,a}=1}{}{}
\end{maxi!}
We again transform the problem \eqref{eq:prob5} into an MDP, with the modified rewards $\widetilde{R_s^a} := R_s^a - a \gamma^*$. The value function $V_s^*$ for state $s$ satisfies the Bellman equation
\begin{align*}
  g(\gamma^*) + V_s^* & = \max_a \big\{ \widetilde{R_s^a}+\sum_{s'} V_{s'}^* \cdot P^a_{ss'} \big\} \\
    & = \max \big\{ R_s^0 + \sum_{s'} V_{s'}^* \cdot P^0_{ss'}, \ R_s^1 - \gamma^* + \sum_{s'}V_{s'}^* \cdot P^1_{ss'} \big\} \\
    & = \max \left\{ Q_s^0, \ Q_s^1 \right\},
\end{align*}
where $g(\gamma^*)$ is the optimal value of the linear program \eqref{eq:prob5}. The LP indices for the infinite horizon RB is then defined as $I_s := Q_s^1 - Q_s^0$ for state $s$. The \emph{LP-index policy} is the strict priority policy by using the values $I_s$ as a priority order to rank states within $\splus$, $\sminus$ and $\szero$ at each decision epoch.

We next recall the classical definition of Whittle indices and the concept of indexability for an infinite horizon RB (see for instance \citet{WeberWeiss1990} and \citet{article} for a general discussion on this topic). For each value $\gamma \in \R$, the value function $V_s(\gamma)$ for state $s$ satisfies a similar Bellman equation
\begin{equation} \label{eq:bellman-equation}
  g(\gamma) + V_s(\gamma) = \max_a \big\{ R_s^a - a\gamma +\sum_{s'} V_{s'}(\gamma) \cdot P^a_{ss'} \big\}.
\end{equation}
Define
\begin{equation*}
  \mathcal{S}(\gamma) := \left\{ s \in \mathcal{S} \bigg| R_s^1 - \gamma +\sum_{s'} V_{s'}(\gamma) \cdot P^1_{ss'} > R_s^0 +\sum_{s'} V_{s'}(\gamma) \cdot P^0_{ss'} \right\}.
\end{equation*}
In other words, $\mathcal{S}(\gamma)$ is the set of states for which the $\argmax$ in \eqref{eq:bellman-equation} is $a=1$. The infinite horizon RB is \emph{indexable} if $\mathcal{S}(\gamma)$ expands monotonically from $\emptyset$ to the full set $\mathcal{S}$ when $\gamma$ is decreased from $+\infty$ to $-\infty$. The Whittle index $\gamma_s$ for state $s$ is defined to be the supremum value of $\gamma$ for which $s$ belongs to $\mathcal{S}(\gamma)$: $\gamma_s := \sup \left\{ \gamma \in \R \mid s \in \mathcal{S}(\gamma) \right\}$. WIP is the strict priority policy by using the values $\gamma_s$ as a priority score to rank states within $\splus$, $\sminus$ and $\szero$ at each decision epoch. The next result shows that both the LP-index policy and WIP are LP-priority policies.

\begin{prop} \label{prop:LP-indices-for-IHRB} \
  Assume that the infinite horizon RB is unichain, so that $\sempty = \emptyset$. Then
  \begin{enumerate}
    \item $s \in \splus \Rightarrow I_s > 0$; $s \in \sminus \Rightarrow I_s < 0 $; $s \in \szero \Rightarrow I_s=0 $.
    \item If we assume furthermore that the infinite horizon RB is indexable in Whittle's sense, then their Whittle indices $\gamma_s$ satisfy: $s \in \splus \Rightarrow \gamma_s > \gamma^* $; $s \in \sminus \Rightarrow \gamma_s < \gamma^*$; $ s \in \szero  \Rightarrow \gamma_s=\gamma^*$.
  \end{enumerate}

\end{prop}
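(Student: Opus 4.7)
{Proof proposal.}
The plan is to mirror the argument of Lemma~\ref{lem:sign-of-indices} for part~(1), then exploit indexability together with the continuity of the value functions $V_s(\gamma)$ in the Lagrange parameter $\gamma$ for part~(2).

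For part~(1), I would start from the fact that $\by^*$ is generated by an optimal Markovian stationary policy $\psi^*$ of the unconstrained MDP associated to \eqref{eq:prob5}, via $y^*_{s,a}=\pi^{\psi^*}_s\cdot \psi^*_{s,a}$, where $\pi^{\psi^*}$ is the stationary distribution on $\mathcal{S}$. Under the unichain assumption and because $\sempty=\emptyset$, one has $\pi^{\psi^*}_s=y^*_{s,0}+y^*_{s,1}>0$ for every $s$. Then:
\begin{itemize}
  \item $s\in\splus$ forces $\psi^*_{s,0}=0$ and $\psi^*_{s,1}=1$, so action $1$ is the unique maximizer in the Bellman equation, hence $Q_s^1>Q_s^0$ and $I_s>0$;
  \item $s\in\sminus$ gives the symmetric conclusion $I_s<0$;
  \item $s\in\szero$ means both $\psi^*_{s,0}$ and $\psi^*_{s,1}$ are strictly positive, so both actions are optimal, which forces $Q_s^1=Q_s^0$ and $I_s=0$.
\end{itemize}

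For part~(2), the key tool is the fact that, for each fixed $s$, the map $\gamma\mapsto V_s(\gamma)$ is continuous (it is concave and piecewise linear in $\gamma$, a classical property of the average-cost Bellman equation \eqref{eq:bellman-equation}), and therefore $\gamma\mapsto Q_s^1(\gamma)-Q_s^0(\gamma)=R_s^1-R_s^0-\gamma+\sum_{s'}V_{s'}(\gamma)(P^1_{ss'}-P^0_{ss'})$ is continuous and (non-strictly) decreasing in $\gamma$. Set $\gamma=\gamma^*$ and use part~(1): for $s\in\splus$ we have $Q_s^1(\gamma^*)-Q_s^0(\gamma^*)>0$, so by continuity this strict inequality persists on a small interval $(\gamma^*-\eta,\gamma^*+\eta)$, which shows $s\in\mathcal{S}(\gamma^*+\eta/2)$ and hence $\gamma_s\ge \gamma^*+\eta/2>\gamma^*$. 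For $s\in\sminus$, the same argument applied to $Q_s^0(\gamma^*)-Q_s^1(\gamma^*)>0$ yields $s\notin\mathcal{S}(\gamma^*-\eta/2)$; since indexability asserts that $\mathcal{S}(\cdot)$ expands monotonically as $\gamma$ decreases, $s\notin\mathcal{S}(\gamma)$ for every $\gamma\ge\gamma^*-\eta/2$, so $\gamma_s\le \gamma^*-\eta/2<\gamma^*$. Finally, for $s\in\szero$ we have $Q_s^1(\gamma^*)=Q_s^0(\gamma^*)$, i.e.\ $s\notin\mathcal{S}(\gamma^*)$; combined with the monotone expansion property of $\mathcal{S}(\cdot)$ and the fact that the difference $Q_s^1(\gamma)-Q_s^0(\gamma)$ must become strictly positive for all $\gamma$ slightly below $\gamma^*$ (otherwise it would stay zero on an interval, contradicting indexability together with the monotonicity), one obtains $\gamma_s=\gamma^*$.

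The main obstacle I anticipate is the last point about $s\in\szero$: getting the strict equality $\gamma_s=\gamma^*$ from the tie $Q_s^1(\gamma^*)=Q_s^0(\gamma^*)$ requires a careful use of indexability to rule out a plateau of ties on an interval around $\gamma^*$. I would handle this by noting that on such a plateau, $\mathcal{S}(\gamma)$ would not strictly expand around $\gamma^*$ at $s$, but indexability together with the characterization $\gamma_s=\sup\{\gamma:s\in\mathcal{S}(\gamma)\}$ still gives exactly $\gamma_s=\gamma^*$ since $s\notin\mathcal{S}(\gamma^*)$ (strict inequality in the definition of $\mathcal{S}(\gamma)$) but $s\in\mathcal{S}(\gamma)$ for every $\gamma$ slightly below $\gamma^*$. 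Part~(1) will be the routine complementary-slackness-type argument, and part~(2) essentially amounts to transporting that computation through the parametric Bellman equation.\Halmos
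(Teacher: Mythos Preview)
Your part~(1) is exactly the paper's argument: it simply says the proof is ``analogue to Lemma~\ref{lem:sign-of-indices}'', i.e.\ pass from $\by^*$ to an optimal stationary policy $\psi^*$ of the MDP \eqref{eq:prob5}, read off $\psi^*_{s,a}$ from the sign pattern of $y^*_{s,a}$, and conclude on $I_s$.

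For part~(2) you take a genuinely different route. The paper treats $s\in\szero$ \emph{first} and dispatches it in one line: by indexability, the Whittle index $\gamma_s$ is characterized as the (unique) value of $\gamma$ at which the Bellman indifference
\[
R_s^1-\gamma+\sum_{s'}V_{s'}(\gamma)P^1_{ss'}=R_s^0+\sum_{s'}V_{s'}(\gamma)P^0_{ss'}
\]
holds; since part~(1) gives $I_s=0$, this equality holds at $\gamma=\gamma^*$, so $\gamma_s=\gamma^*$. The cases $s\in\splus$ and $s\in\sminus$ are then said to ``follow similarly''. You instead handle $\splus$ and $\sminus$ first via a continuity/perturbation argument on $\gamma\mapsto Q_s^1(\gamma)-Q_s^0(\gamma)$, which is actually more explicit than the paper's ``similarly'', and leave $\szero$ for last. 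The plateau obstacle you flag for $\szero$ is exactly what the paper sidesteps by invoking the indifference-point characterization of $\gamma_s$ directly rather than arguing about the sign of $Q_s^1-Q_s^0$ near $\gamma^*$. Both arguments ultimately rest on the same tacit uniqueness of the indifference point under indexability, so neither is strictly more rigorous on that score; the paper's ordering just makes the $\szero$ case a one-liner, while your ordering makes the $\splus,\sminus$ cases cleaner.
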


\proof{Proof.} \
  \begin{enumerate}

   \item The proof of this claim is analogue to Lemma~\ref{lem:sign-of-indices}.

   \item We first show that for any state $s \in \szero$ (if there are any), its Whittle index $\gamma_s$ is exactly $\gamma^*$, the Lagrange multiplier in \eqref{eq:prob5}. Indeed, by definition of indexability, for any $\gamma>\gamma_s$, one has $s \notin \mathcal{S}(\gamma)$; and for any $\gamma<\gamma_s$, $s \in \mathcal{S}(\gamma)$. So $\gamma_s$ is the unique value of $\gamma$ that satisfies the equality
        \begin{equation*}
          R_s^1 - \gamma +\sum_{s'} V_{s'}(\gamma) \cdot P^1_{ss'} = R_s^0 +\sum_{s'} V_{s'}(\gamma) \cdot P^0_{ss'}.
        \end{equation*}
       On the other hand, by item 2 of Proposition \ref{prop:LP-indices-for-IHRB}, the states in $\szero$ are the states with null LP index, so the above equality are satisfied with $\gamma=\gamma^*$. Consequently the Whittle index $\gamma_s$ for $s \in \szero$ is $\gamma^*$. The other two implications then follow similarly.    \Halmos
  \end{enumerate}
\endproof

\section{\bfseries\scshape{Numerical experiments}}  \label{sec:numerical-experiment}

In this numerical part, we first demonstrate that tie-solving within $\splus$ and $\sminus$ for the Lipschitz continuous policies using water-filling is important in Section \ref{subsec:tie-solving-check}. We next show the advantage of the LP-update policy to the LP-index policy on the applicant screening problem in Section \ref{subsec:applicant-screening}, a model proposed in \citet{Brown2020IndexPA}.

\subsection{Tie-solving within $\splus$ and $\sminus$} \label{subsec:tie-solving-check}

\newcommand\relsmall{V_{\mathrm{rel}}}
\newcommand\relsmallmin{V_{\mathrm{rel-min}}}

The water-filling policy defined in Section~\ref{subsec:water-filling} is not uniquely defined as it depends on the tie-breaking rule within $\splus$ and $\sminus$. In Figure~\ref{fig2}, we compare the two tie-breaking rules:
\begin{itemize}
  \item LP-index: Give priority to the highest LP-index first, defined in Section \ref{subsec:LP-indices};
  \item Random tie-solving: Ties within $\splus$ and $\sminus$ are solved according to a random priority order that is drawn at the beginning of each simulation. The reported number for this policy is the average among $100$ priority orders.
\end{itemize}
We emphasize that these two policies are LP-compatible policies: to apply them, we first solve the LP to define $\splus$ and $\sminus$ and apply a water-filling policy. The above tie-breaking rules are only used within $\splus$ and $\sminus$.  This implies that all policies are therefore asymptotically optimal.

In each case, we compute the average \emph{score} of a policy on $100$ randomly sampled models of dimension $d=10$, horizon $T=30$ and arm population $N\in\{10\dots50\}$. To generate each model, we sample the matrices $\bP^0$ and $\bP^1$ as independent uniformly distributed probability matrices and the reward vectors as uniform between $0$ and $1$.  The score is defined as follows (for ease of notation, we omit all dependence on $(\bm(0),t)$ in this section). For a given RB, recall that $\relsmall$ is the value of the linear program \eqref{eq:relaxed_problem} and let us denote by $\relsmallmin$ the value of the same linear program but where the maximization is replaced by a minimization. The value of a policy $\pi$ is $V^N_\pi$. We define the score of the policy $\pi$ as:
\begin{equation} \label{eq:score}
  \text{score}^N_\pi = \frac{V^N_\pi - \relsmallmin}{\relsmall-\relsmallmin}.
\end{equation}
The score is a number between $0$ and $1$ (higher being better). Theorem~\ref{th:water-filling} shows that, any water-filling policy is asymptotically optimal, regardless of the tie-breaking used within $\splus$ or $\sminus$, \emph{i.e.} $\lim_{N\to\infty}\text{score}^N_\pi= 1$.

\begin{figure}[hbtp]
  \centering
  \begin{tabular}{@{}cc@{}}
    \includegraphics[width=0.49\linewidth]{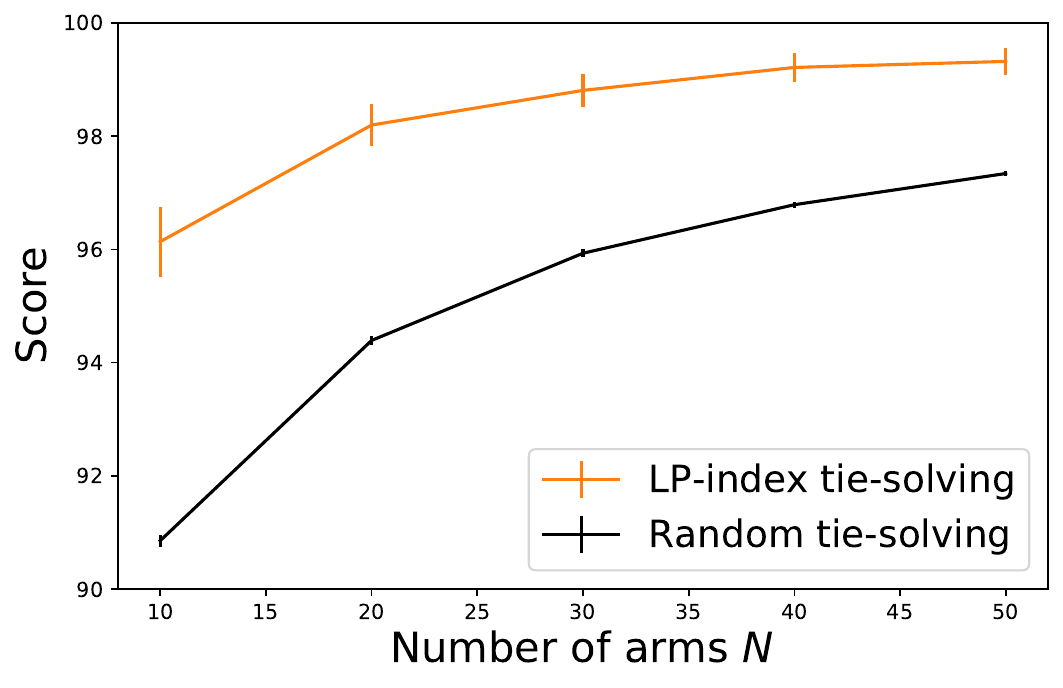}
    &\includegraphics[width=0.49\linewidth]{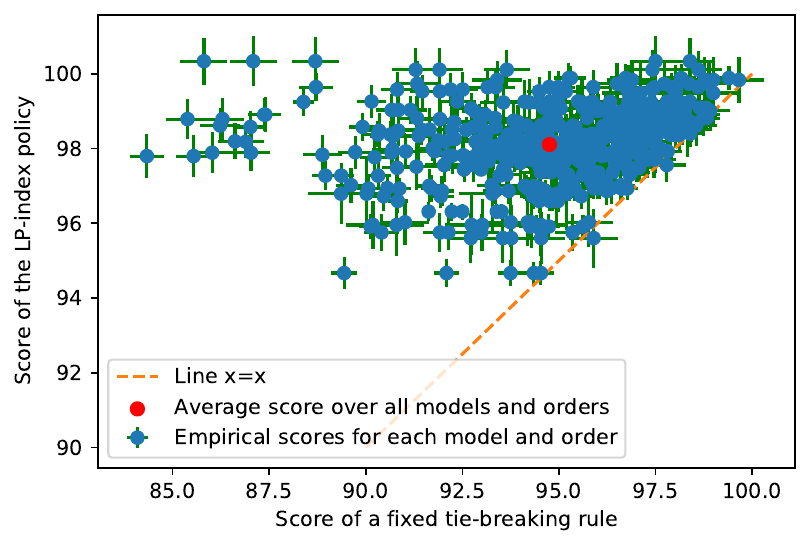}\\
    (a) Average score as a function of $N$
    &(b) LP index vs arbitrary tie-solving ($N=20$)
  \end{tabular}

  \caption{Performance of the different tie-solving among $\splus$ and $\sminus$: LP indices, and fixed priorities. We report the normalized score (in \%) as a function of the number of arms. All policies are asymptotically equivalent but the LP-index policy performs better for all finite values of $N$.}
  \label{fig2}
\end{figure}

Figure~\ref{fig2} shows that the choice of tie-solving within $\splus$ and $\sminus$ has a significant influence on the performance of the policies. On the left figure, we plot the average score over $100$ models for the LP-index policy and for $5$ random orders. This figure shows that, on average, the LP-index performs much better than a random tie-solving.  In the right figure, we fix  $N=20$ and for the same $100$ models and $5$ tie-solving rules, we plot the average score of the LP index as a function of the average score of each of the fixed tie-solving rules (this makes 500 points in total). This figure shows that the LP-index is almost always the best tie-solving rules: More precisely, among the $500$ pairs of scores considered, we observe only three points that suggest that the LP-index tie-breaking rule could be beaten, and in each case the gain of this fixed order policy is much smaller than the confidence interval, represented by the crossbars on each dot in the plots; the rest 497 pairs stay above the line $y = x$.

\subsection{Case study: applicant screening problem} \label{subsec:applicant-screening}

We discuss in this section the applicant screening problem proposed in Section 6.2.2 of \citet{Brown2020IndexPA}, and show that the LP-update policy outperforms the LP-index policy on this problem. Consider a group of $N$ applicants applying for a job. The decision maker's goal is to hire the best possible $\beta N$ applicants. Each applicant $n$ has an unknown quality level $p_n \in [0,1]$. At each decision epoch $t$, the decision maker interviews $\alpha N$ applicants and receives, for each interviewed candidate, a signal $d_n(t)\in\{0,1\}$ that is distributed according to a Bernoulli distribution of parameter $p_n$. All variables $d_n(t)$ are supposed to be independent (given $p_n$).

This problem can be seen as a RB with $N$ arms by considering a Bayesian model in which we assume that each $p_n$ is random and distributed uniformly between $0$ and $1$. Each applicant (arm)  is modeled by an MDP. The state $s_n$ of this applicant is $s_n=(a_n,b_n)$ and indicates that the posterior distribution of $p_n$ given previous observation is a beta distribution of parameters $(a_n,b_b)$: at time $0$, $a_n=b_n=1$. Afterwards, $s_n$ are updated using Bayes' rule to $(a_n+d_n,b_n+1-d_n)$ when interviewed. An applicant's state does not change when not interviewed. The rewards are set to zero during the first $T-1$ interview periods. In the final period $T$, the decision maker admits $\beta N$ applicants. The reward for admitting the applicant $n$ is $p_n$. Note that if $p_n$ is uniformly distributed, then $\expect{p_n \mid s_n} = a_n/(a_n+b_n)$. The reward for those not admitted is zero.

In our numerical study, we choose the same parameters as those used in Figure 4 of \citet{Brown2020IndexPA}, where $\alpha=\beta=0.25$, $T=5$. We compute the LP-policies by assuming that the initial state of all applicants is $(1,1)$ and consider two cases:
\begin{itemize}
  \item \textbf{Correct prior} -- In the left-panel of Figure~\ref{fig4}, the $p_n$ are generated uniformly between $0$ and $1$.
  \item \textbf{Wrong prior} -- On the right-panel of Figure~\ref{fig4}, the $p_n$ are generated using a distribution $beta(3,1)$, while the selection algorithm is constructed from a LP-relaxation that assumes that $p_n$ is uniformly distributed on $[0,1]$.
\end{itemize}
The first case fits into the framework of our paper, and in particular implies the asymptotic optimality. The second case does not fall into our framework because the transition matrices that we use to construct the policies are not the correct ones.  This second case corresponds to a decision maker having a wrong prior about the candidates.

\begin{figure}[hbtp]
  \centering
    \includegraphics[width=1\linewidth]{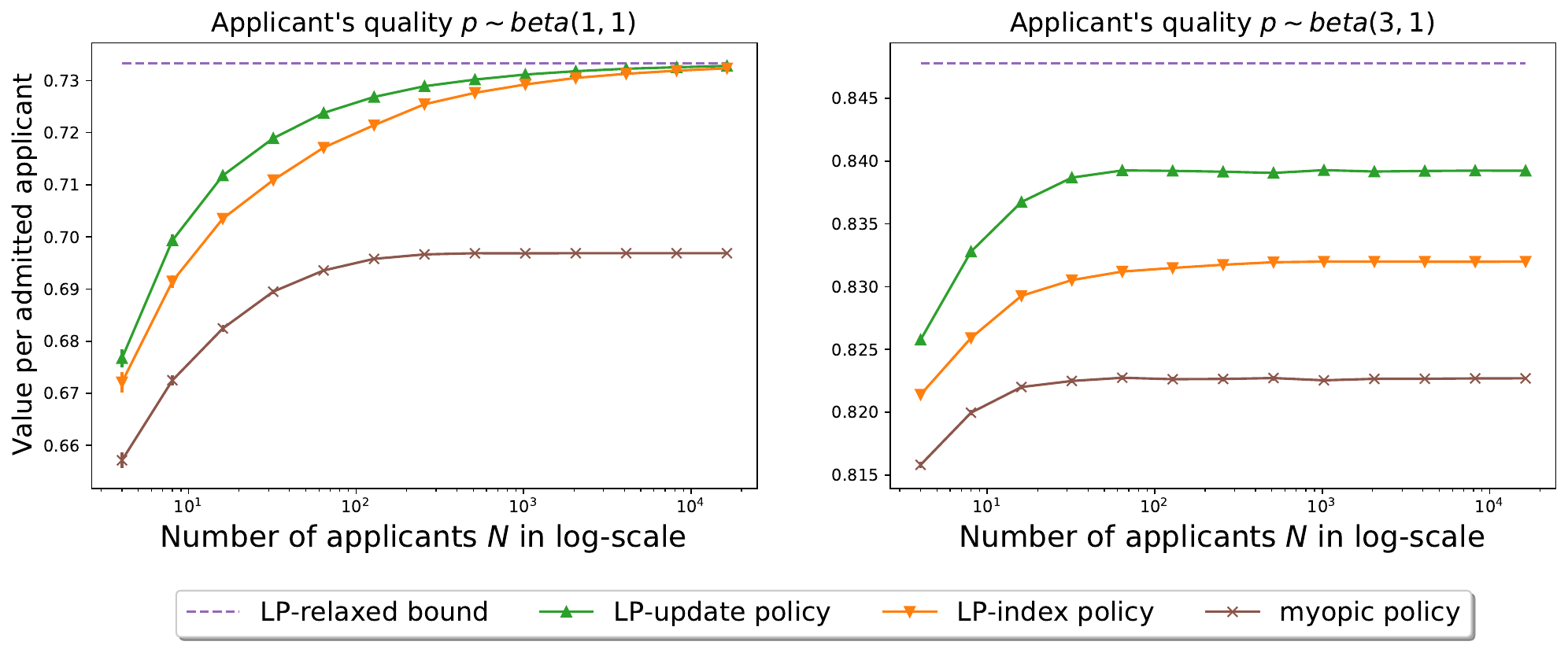}
    \caption{Performance on applicant screening problem when the decision maker knows the prior distribution of $p_n$ (left panel) or has access to a wrong prior information (right panel).}
  \label{fig4}
\end{figure}

As expected, the LP-index policy performance displayed in the left panel reproduces that of the Lagrange policy with optimal tie-breaking shown in Figure~4 of \citet{Brown2020IndexPA}, and the LP-update policy performance reproduces that of the Lagrange policy with re-optimization and optimal tie-breaking shown in Figure 6 of the same paper. For this scenario, Theorem \ref{th:water-filling} and Theorem \ref{thm:LP-update-rate} can be applied, and both the LP-index and the LP-update policies converge to the LP-relaxed bound. Moreover, the LP-update policy always outperforms the LP-index policy, with an advantage that is more apparent for $N$ in the middle range. This shows the benefit of applying updates, even in this ideal scenario.

The situation is quite different when the prior of the decision maker is wrong (right panel of Figure~\ref{fig4}). In this case, the LP-update and the LP-index policies converge to different values, that are both below the LP-relaxed bound. This is reasonable since the assumption on the $p$'s is wrong. Here, the LP-update policy outperforms the LP-index policy by a large margin, especially when $N$ is large. This is because by applying updates in this situation helps to correct the error due to the wrong assumption on the initial $p$ value of each applicant. This is yet another advantage of the LP-update policy. We expect such an advantage to hold more generally on any Bayesian RB model.

%

\section{\bfseries\scshape{Conclusion and future direction}}  \label{sec:conclusion-and-future-researches}

In this paper we propose a general framework to study LP-based policies for RB. We show that the asymptotic behavior of these policies is closely related to properties of their corresponding deterministic maps. We also illustrate the idea of applying updates and demonstrate its advantage to previously existing LP-based policies on finite horizon problems. Actually, the same idea of applying updates can be applied more broadly on Weakly Coupled MDPs, which generalize the RB model in this paper by allowing multiple actions for each arm and multiple resource constraints, as long as we can formulate the relaxed problem as a linear program. This has been accomplished in a followup paper \citet{gast2022lp}.

\section*{Acknowledgements}

This work is supported by the ANR project REFINO (ANR-19-CE23-0015).


\bibliographystyle{informs2014}
\bibliography{reference}

\end{document}